\newcommand{\R}{\mathbb{R}}
\newcommand{\E}{\mathbb{E}}
\def\red#1{{\color{red}#1}}
\def\gr#1{{\color{green}#1}}
\def\Yc{{\cal Y}}
\def\Zc{{\cal Z}}
\def\Fc{{\cal F}}
\def\Uc{{\cal U}}
\def\Wc{{\cal W}}
\def\Jc{{\cal J}}
\definecolor{codegreen}{rgb}{0,0.6,0}
\definecolor{codegray}{rgb}{0.5,0.5,0.5}
\definecolor{codepurple}{rgb}{0.58,0,0.82}
\definecolor{backcolour}{rgb}{0.95,0.95,0.92}
\lstdefinestyle{mystyle}{
    backgroundcolor=\color{backcolour},   
    commentstyle=\color{codegreen},
    keywordstyle=\color{magenta},
    numberstyle=\tiny\color{codegray},
    stringstyle=\color{codepurple},
    basicstyle=\ttfamily\footnotesize,
    breakatwhitespace=false,         
    breaklines=true,                 
    captionpos=b,                    
    keepspaces=true,                 
    numbers=left,                    
    numbersep=5pt,                  
    showspaces=false,                
    showstringspaces=false,
    showtabs=false,                  
    tabsize=2
}
\newcommand{\overbar}[1]{\mkern 1.5mu\overline{\mkern-1.5mu#1\mkern-1.5mu}\mkern 1.5mu}
\newtheorem{remark}{Remark}
\newtheorem{theorem}{Theorem}[section]
\newtheorem{assumption}{Assumption}[section]
\theoremstyle{definition}
\newtheorem{definition}{Definition}[section]
\title{\normalsize\bf%
\vspace{2cm}
\uppercase{ Deep learning algorithms for FBSDEs with jumps: Applications to option pricing and a MFG model for smart grids \thanks{T\lowercase{his work has received financial support from  \uppercase{FIME
(F}inance for \uppercase{E}nergy \uppercase{M}arkets) research initiative of the \uppercase{I}nstitut \uppercase{E}uroplace de \uppercase{F}inance, which is gratefully acknowledged.}}}
}
\author{Clemence Alasseur \footnote{EDF R\&D \& FiME, 91120 Palaiseau, France,  email: \texttt{clemence.alasseur@edf.fr}} \and Zakaria Bensaid \footnote{
Laboratoire Manceau de Mathématiques \& FR CNRS N\textsuperscript{o} 2962, Institut du Risque et de l’Assurance, Université Le Mans, France, email: \texttt{zakaria.bensaid@univ-lemans.fr}} \and Roxana Dumitrescu \thanks{Department of Mathematics, King's College London, United Kingdom, email: \texttt{roxana.dumitrescu@kcl.ac.uk}} \and Xavier Warin \footnote{EDF R\&D \& FiME, 91120 Palaiseau, France, email: \texttt{xavier.warin@edf.fr}}}
\pgfplotsset{compat=1.18}
\begin{document}

\date{}

\maketitle

\vspace{-0.5cm}

\bigskip
\noindent
{\small{\bf ABSTRACT.} In this paper, we introduce various  machine learning solvers for (coupled) forward-backward systems of stochastic differential equations (FBSDEs) driven by a Brownian motion and a Poisson random measure. We provide a rigorous comparison of the different algorithms and demonstrate their effectiveness in various applications, such as cases derived from pricing with jumps and mean-field games. 
In particular, we show the efficiency of the deep-learning algorithms to solve a coupled multi-dimensional FBSDE system driven by a time-inhomogeneous jump process with stochastic intensity, which describes the Nash equilibria for a specific mean-field game (MFG) problem for which we also provide the complete theoretical resolution.  More precisely, we develop an extension of the MFG model for smart grids introduced in \cite{MFG_revised} to the  case when the random jump times correspond to the jump times of a doubly Poisson process. We first provide an existence result of an equilibrium and derive its semi-explicit characterization in terms of a multi-dimensional FBSDE system  in the linear-quadratic setting. We then compare the MFG solution to the optimal strategy of a central planner and provide several numerical illustrations using the deep-learning solvers presented in the first part of the paper.  


\medskip
\noindent
{\small{\bf Keywords}{:} 
Machine learning; Solver; FBSDE with jumps;  Deep BSDE; Pricing; Mean-field games;   Cox process; Demand side management}

\baselineskip=\normalbaselineskip
\section{Introduction}

This paper is devoted to the numerical resolution of a coupled system of forward-backward stochastic differential equations (in short FBSDEs) with jumps of the form:
\begin{align}\label{sto}
\begin{cases}
    dX_{t} =  b(t,X_{t},Y_{t})dt + \sigma(t,X_{t}) dW_{t} + \int_{\mathbb{R}^{d}\setminus \{ 0 \}} \beta (t,X_{t^-}, e) \Tilde{\mathcal{J}}(dt,de), \\
    dY_{t} = -f(t,X_{t},Y_{t}) dt + Z_{t}dW_{t} + \int_{\mathbb{R}^{d}\setminus \{ 0 \}} U_{t}(e) \mathcal{\tilde{J}}(dt,de)\red{,}\\
    X_{0} = \xi, \quad Y_{T} = g(X_{T})\red{,}
    \end{cases}
    t \in [0,T],
\end{align}
where the functions $b,\sigma, \beta,f, g$, as well as the the initial condition $\xi$ satisfy appropriate assumptions which ensure the well-posedness of the system $\eqref{sto}$.

This kind of equations are linked to a class of  (deterministic) partial integro-differential equations, which are non-local and take the following form:
\begin{align}\label{det}
\begin{cases}
\frac{\partial u}{\partial t}(t,x) + \mathcal{L} u(t,x) + f\left(t, x, u(t,x) \right) = 0, \quad (t, x) \in [0, T) \times \mathbb{R}^{d},\\
u(T, x) = g(x), \quad x \in \mathbb{R}^{d},
\end{cases}
\end{align}
where the second-order nonlocal operator $\mathcal{L}$ is defined as follows:

\begin{align*}
\mathcal{L} u(t,x) = \langle b(t, x, u(t,x)), D_{x} u(t,x) \rangle + \frac{1}{2} \langle  D_{xx}^2 u(t,x) \sigma(t, x), \sigma(t,x) \rangle \\ 
+ \int_{\mathbb{R}^{d}\setminus \{ 0 \}} \left(u(t, x + \beta(t, x, e)) - u(t, x) - \langle D_{x} u(t,x), \beta(t, x, e) \rangle \right) \nu(de).
\end{align*}
Indeed, it is known that, under mild assumptions, $Y_t=u(t,X_t)$, where $u$ corresponds to the viscosity solution of \eqref{det}.
We refer to \cite{pardoux1990adapted} for a rigorous connection between PDEs and FBSDEs in a Markovian setting in the case of decoupled system of FBSDEs and Brownian filtration, further extended to the case with jumps in \cite{BP97} and to the case of coupled FBSDEs with jumps in \cite{zhen1999, wu2003fully}.

\textit{Literature review.} The resolution of partial differential equations (in short PDEs) by standard techniques as finite difference methods becomes unfeasible beyond dimension $3$. An alternative method to solve nonlinear PDEs in dimension above $4$  is based
on the  backward stochastic differential equation (in short BSDE) representation of semilinear PDEs: using the time discretization
scheme proposed in \cite{BT04}, some effective algorithms based on regression have been developed in \cite{gobet2005regression}, \cite{lemor2006rate}
and has led to a lot of research as shown for example in \cite{gobet2016linear}. This regression technique uses some basis functions that can be either some global polynomials as in \cite{longstaff2001valuing} or some local polynomials as proposed in \cite{bouchard2012montecarlo}: therefore this methodology still faces the curse of dimensionality and can only solve some problems in dimension below $7$ or $8$. 

Over the past few years, machine learning methods have shown exceptional promise to solve high-dimensional nonlinear PDEs (see e.g. \cite{Darbon_2016, han2018solving, chan2019machine}). Machine learning methods have emerged since the pioneering papers by
\cite{han2018solving} and \cite{sirignano2018dgm}, and have shown their efficiency for solving high-dimensional nonlinear PDEs by
means of neural networks approximation.
\cite{sirignano2018dgm} proposes
the so-called Deep Galerkin Method which uses the automatic numerical differentiation of the solution to solve the PDE on a finite domain. The authors prove the
convergence of their method,  but without information on the rate of convergence. An alternative methodology to solve PDEs in high-dimension is based on the BSDE representation of the solution of the PDE and deep learning approximations (see e.g. \cite{han2018solving, BEJ19, HL20, Ji20, germain2020deep}).
Two main classes of algorithms have been developed.  The first class is based on the \textit{global} approach, which was initially proposed in \cite{han2018solving} to tackle semi-linear PDEs. It
consists in the training of as many neural networks as time steps by solving in a forward way the backward representation of the PDE solution. The $Z_t$ process is represented by a different
neural network $Z_{i}^{\theta}$ with parameters $\theta$ at each date $t_i$. Instead of solving the BSDE starting from
the terminal condition, the method writes it down as a forward equation and an optimization
problem aiming to reach the terminal condition $g(X_T)$ by minimizing a mean squared error $\mathbb{E}[|Y_T-g(X_T)|^2]$. It allows to solve PDEs in high dimension and a convergence study of Deep BSDE
is conducted in \cite{HL20}. In \cite{BEJ19}, this approach has been extended to fully nonlinear equations. Furthermore, \cite{chan2019machine} shows that using a single network across all dates is more efficient, and additionally introduces a fixed-point algorithm to resolve semi-linear PDEs.

The second class of algorithms is based on the  \textit{local} approach, first proposed in \cite{hure2020deep}, which consists in solving local optimization problems at each time step in a backward manner. Unlike the global method, the local method involves successive optimization problems of moderate dimension. At each time step, local neural networks are trained, thus it results in as many learning problems as time steps with two  neural networks (in the setting of a Brownian filtration). This process is further simplified by utilizing strategies inspired from the standard backward resolution of BSDEs with conditional expectations and regression techniques from \cite{BT04, gobet2005regression, lemor2006rate, bender2007forward}. The resulting solver was named the Deep Backward Dynamic Programming (DBDP) solver. Furthermore, the methodology is then expanded to handle fully nonlinear PDEs in \cite{PWG21} by merging it with ideas proposed in \cite{Bec19}. Additionally, extensive tests performed in \cite{hure2020deep} indicate that the local method yields superior results compared to the global one, such as \cite{PWG21} for fully nonlinear dynamics. A more robust machine learning solver, called deep backward multi-step scheme (MDBDP), was introduced in \cite{germain2020deep} that builds on the Linear Regression Multi step-forward Dynamic Programming (MDP) scheme for discrete BSDEs introduced in \cite{gobet2016linear}. According to the authors, the multi-step scheme yields the best performance when compared to other algorithms in the \textit{local} approach.

Machine learning techniques to solve coupled FBSDEs within a Brownian filtration are explored in \cite{HL20} and \cite{Ji20}. The resulting algorithms  are all rooted in the \textit{global} approach first introduced in \cite{han2018solving}. 

The resolution of partial integro-differential equations (in short PIDEs) has been much less regarded in the literature, even in the decoupled case, the main approaches to solve them being based on the finite-difference methods (see e.g. \cite{PIDE05volt}) and  the probabilistic representation of the solution in terms of a FBSDE system, a discrete time approximation of the associated decoupled forward-backward SDE with jumps being proposed in \cite{BE08}. As it can be noticed above, the literature on machine learning solvers for standard PDEs is quite rich by now. In contrast, the case of integro-differential PDEs has received very little attention.  Several algorithms have been recently proposed in: \cite{bayraktar2022neural, frey2022convergence,Gnoatto22, liu2023study}). In these papers,  the deep-learning solvers are based on the approximation of the solution of the PIDE and, for the gradient, either another neural network is employed \cite{bayraktar2022neural, frey2022convergence, liu2023study}, or the Automatic differentiation in TensorFlow is applied \cite{Gnoatto22}.}

\textit{Contributions.}  The aim of our paper is to develop deep-learning solvers for the (coupled) FBSDE system \eqref{sto} and a specific multi-dimensional coupled FBSDE system driven by a time-inhomogeneous Poisson process with stochastic intensity which is shown to solve an extended version of the MFG model in \cite{MFG_revised}. Our main contributions are the following:
\begin{itemize}
\item In the first part of the paper, we introduce five different algorithms to solve the system \eqref{sto}. 
Furthermore, we propose two different variants of the DBDP and MDBDP solvers to handle the jumps part. We emphasize that most of the literature on deep learning solvers for FBSDEs with jumps does not treat the fully coupled case and that the algorithms developed in this paper are new also in the context of decoupled FBSDEs with jumps.
\item We provide a rigorous numerical comparison between all methods in terms of computation time, stability and convergence for different pricing models, which require solving a decoupled FBSDE system. To assess the performance of the algorithms for coupled FBSDEs,  we purposefully introduce an equivalent coupled FBSDE system in the context of pricing which uses the explicit form of the analytical solution already known. This allows to benchmark the deep learning solvers in different settings.
\item In the second part of the paper, we develop an extension of the MFG model introduced in \cite{MFG_revised} to the case when the random jump times correspond to the jumps times of a doubly Poisson process. We first provide an existence result of an equilibria by using the stochastic maximum principle and derive its semi-explicit characterization in terms of a multi-dimensional coupled FBSDE system driven by a Cox process in the linear-quadratic setting. We then compare the MFG solution to the optimal strategy of a central planner. 

\item We build a numerical algorithm based on the deep-learning solvers presented in the first part of the paper to solve the multi-dimensional coupled FBSDE systems driven by a Cox process, which characterize the Nash equilibrium for the MFG problem and the mean-field optimal control of the central planner. In particular, we numerically demonstrate the robustness of our deep learning-based numerical methods in handling time-inhomogeneous jump processes with stochastic intensity.
\end{itemize}
 The paper is organized as follows: in Section 2.1, we give some Preliminaries on existence and uniqueness results for (coupled) FBSDEs with jumps and on neural networks. In Section 2.2, we introduce the five different deep learning solvers for (coupled) FBSDEs with jumps. In Section 2.3, we perform several numerical tests for examples derived from option pricing and provide a detailed analysis and comparison between the different algorithms. In Section 3.1, we present the mean-field game model. In Section 3.2, we characterize the mean-field equilibria, and in Section 2.3 we study the related problem of a central planner and characterize the mean-field optimal control (MFC). Section 3.4. is devoted to the implementation of the Deep learning solvers for the multi-dimensional fully coupled FBSDE system characterizing the MFG equilibria (and the MFC optimal control), as well as to the comparison between the different algorithms. Finally, in Section 3.5 we provide an interpretation of the numerical results from an economic perspective.

\section{General Deep Learning algorithms for coupled FBSDEs with jumps} 
This section is devoted to the presentation of different deep learning algorithms for coupled FBSDEs with jumps and of their performance on several numerical examples. We shall start with some preliminaries. 
\subsection{Preliminaries}\label{sec:ML}

\,\,\,\,\,\,\, In this subsection, we first introduce some notation, as well as some existence and uniqueness results related to coupled FBSDEs with jumps. We then focus on neural networks which are used to solve numerically the FBSDE system.


\underline{\textit{Coupled FBSDEs with jumps}}. Fix a time horizon $T < \infty$ and let $(\Omega,  \mathbb{F}, \mathbb{P})$ be a complete probability space. Let $W_t$ be a $d$-dimensional Brownian motion and $\mathcal{J}(dt,de)$ an independent Poisson random measure with compensator $\nu(de)dt$ such that $\nu(de)$ is a $\sigma$-finite measure on $\mathbb{R}^{d}\setminus \{ 0 \}$, equipped with its Borel field $\mathcal{B}(\mathbb{R}^{d}\setminus \{ 0 \})$. Let $\mathcal{\tilde{J}}$ be the compensated jump measure, i.e. $\tilde{\mathcal{J}}(dt, de) := \mathcal{J}(dt, de) - \nu(de)dt$. Let $\mathbb{F} = (\mathcal{F}_t)_{t \in [0, T]}$ be the (completed) filtration associated with $W$ and $\mathcal{J}$. Assume that $\nu$ satisfies the condition
\begin{equation*}
    \int_{\mathbb{R}^{d}\setminus \{ 0 \}} (1 \wedge |e|^2) \nu(de) < \infty.
\end{equation*}
We now introduce the following spaces, using the usual inner product and the Euclidean norm in $\mathbb{R}^d, \mathbb{R}^k$ and $\mathbb{R}^{k \times d}$, respectively.
\begin{itemize}
    \item $L^2(\mathcal{G}, \R^d)$ is the set of $\R^d$-valued random variables $\xi$ which are $\mathcal{G}$-measurable such that $\mathbb{E}\left [|\xi|^2 \right] < + \infty$, where $\mathcal{G}$ is a sub-$\sigma$-algebra of $\mathcal{F}_T$.
    \item $\mathcal{S}^2$ is the set of $\mathbb{F}$-adapted càdlàg $\R^k$-valued processes $Y$ such that $\mathbb{E}\left [\sup_{0 \leq t \leq T} |Y_t|^2 \right] < + \infty$.
    \item $\mathcal{H}^2$ is the set of $\mathbb{F}$-predictable $\R^{k \times d}$-valued processes $Z$ such that $\| Z \|^{2} : = \mathbb{E}[\int_{0}^{T} |Z_{t}|^{2} dt] < + \infty$.
    \item $\mathcal{H}_{\nu}^2$ is the set of $\mathcal{P} \otimes \mathcal{B}(\R^d \setminus \{0\})$-measurable maps $U$ taking values in $\R^k$ such that $\| U \|^{2}_{\nu} : = \mathbb{E}[\int_{0}^{T} \int_{\mathbb{R}^{d}\setminus \{ 0 \}} |U_{t}(e)|^{2} \nu(de) dt] < + \infty$, where $\mathcal{P}$ denotes the $\sigma$-field of $\mathbb{F}$-predictable subsets of $\Omega \times [0,T]$.
\end{itemize}

\noindent We now introduce the following coupled FBSDE system with jumps:
\begin{align}\label{eqn:FBSDEDL}
\begin{cases}
    dX_{t} =  b(t,X_{t},Y_{t})dt + \sigma(t,X_{t}) dW_{t} + \int_{\mathbb{R}^{d}\setminus \{ 0 \}} \beta (t,X_{t^-}, e) \Tilde{\mathcal{J}}(dt,de), \\
    dY_{t} = -f(t,X_{t},Y_{t}) dt + Z_{t}dW_{t} + \int_{\mathbb{R}^{d}\setminus \{ 0 \}} U_{t}(e) \mathcal{\tilde{J}}(dt,de)\red{,}\\
    X_{0} = \xi, \quad Y_{T} = g(X_{T}) \red{,}
    \end{cases}
    t \in [0,T],
\end{align}
where the functions $b :[0,T] \times \mathbb{R}^{d} \times \mathbb{R}^{k}  \rightarrow \mathbb{R}^{d} $, $\sigma : [0,T] \times \mathbb{R}^{d} \rightarrow \mathbb{R}^{d}$, $\beta: [0,T] \times \mathbb{R}^{d} \times \mathbb{R}^{d}\setminus \{ 0 \} \rightarrow \mathbb{R}^{d} $,  $ f :[0,T] \times \mathbb{R}^{d} \times \mathbb{R}^{k}  \rightarrow \mathbb{R}^{k}$ and $g : \mathbb{R}^{d} \rightarrow \mathbb{R}^{k}$ are measurable maps which have to satisfy the following assumptions ensuring the well-posedness of the FBSDE system.

\begin{assumption}\label{assume:Regularity}
    \begin{enumerate}
    \item[(i)] $b$ and $f$ are uniformly Lipschitz with respect to $(x, y)$, and there exists $\rho: \mathbb{R}^{d}\setminus \{ 0 \} \rightarrow \mathbb{R}^+$ with $\int_{\mathbb{R}^{d}\setminus \{ 0 \}} \rho^2(e) \nu(de) < +\infty$ such that for any $t \in [0, T]$, $x, \bar{x} \in \mathbb{R}^d$,  and $e \in \mathbb{R}^{d}\setminus \{ 0 \}$, we have:
    \begin{align*}
    &\lvert \beta (t, x, e) - \beta (t, \bar{x}, e)\rvert \leq \rho(e) \lvert x - \bar{x}\rvert .
    \end{align*}
    
    \item[(ii)] $\sigma$ and $g$ are uniformly Lipschitz with respect to $x \in \mathbb{R}^d$.
    
    \item[(iii)] Furthermore,
    \begin{align*}
    &  \int_0^T \lvert b(s, 0, 0)\rvert^2 ds  + \int_0^T  \lvert f(s, 0, 0)\rvert^2 ds + \int_0^T  \int_{\mathbb{R}^{d}\setminus \{ 0 \}} \lvert \beta(s, 0, e)\rvert^2 \nu(de) ds < \infty.
    \end{align*}
\end{enumerate}
\end{assumption}

Given an $k \times d$ full-rank matrix $G$ and $G^T$ being the transposed matrix of $G$, we define:

\begin{align*}
\pi = \begin{pmatrix} x \\ y \end{pmatrix} \mbox{ in } \R^d \times \R^k , \quad A(t, \pi) = \begin{pmatrix} -G^T f\\ Gb  \end{pmatrix}(t, \pi) \mbox{ in } \R^d \times \R^k.
\end{align*}
For any $\pi = (x, y)$, and $\bar{\pi} = (\bar{x}, \bar{y})$, let us denote  $ \tilde{x} = x - \bar{x}$, and $\tilde{y} = y - \bar{y}$.
We also assume the following monotonicity conditions hold .
\begin{assumption} \label{assume:Monotonicity}
 There exists $\gamma_1, \gamma_2, \mu_1$ non negative constants with $\gamma_1 + \gamma_2 > 0$, $\gamma_2 + \mu_1 > 0$, such that
   \begin{enumerate}
\item[(i)] $\langle A(t, \pi) - A(t, \bar{\pi}), \pi - \bar{\pi} \rangle \leq -\gamma_1 |G \tilde{x}|^2 - \gamma_2 |G^{T} \tilde{y}|^2$.
\item[(ii)] $\langle g(x) - g(\bar{x}), G(x - \bar{x}) \rangle \geq \mu_1 |G \tilde{x}|^2$, 
\end{enumerate}
Moreover, we have $\gamma_1 > 0, \mu_1 > 0$ (respectively, $\gamma_2 > 0$) when $k > d$ (respectively, $k < d$).
 \end{assumption}
 \begin{assumption}\label{assume:time}
    Assume that $b$, $f$, $\sigma$ and $\beta$ are uniformly $\frac{1}{2}$-Hölder continuous in time.
 \end{assumption}
\begin{assumption} \label{assume:small}
    Assume that $k = 1$ and for any $t \in [0,T]$, $(x,y) \in \mathbb{R}^{d + 1}$, we have:
    $$\lvert b(t,x,y) \lvert +  \lvert f(t,x,y) \lvert + \lvert \sigma(t,x) \lvert + \lvert g(x) \lvert \leq K (1 + \lvert x \lvert + \lvert y\lvert),$$
    and there exists $\rho: \mathbb{R}^{d}\setminus \{ 0 \} \rightarrow \mathbb{R}^+$ with $\int_{\mathbb{R}^{d}\setminus \{ 0 \}} \rho^2(e) \nu(de) < +\infty$ such that for any $t \in [0, T]$, $x \in \mathbb{R}^d$,  and $e \in \mathbb{R}^{d}\setminus \{ 0 \}$, we have
    $\lvert \beta(t,x,e) \lvert \leq \rho(e)(1 + \lvert x \lvert)$.
\end{assumption} 
We now give two well-posedness results for the FBSDE \eqref{eqn:FBSDEDL}, as well as a decoupling field representation of the $Y$-component of the system, which follow from \cite{zhen1999}, \cite{LI20141582}, and \cite{BE08}.

 \begin{theorem}[\textit{Well-posedness for arbitrary time horizon}]
     Under the Assumptions \ref{assume:Regularity} and \ref{assume:Monotonicity}, for any $\xi \in L^2(\mathcal{F}_0, \mathbb{R}^d)$, FBSDE (\ref{eqn:FBSDEDL}) has a unique  solution $(X, Y, Z, U) \in \mathcal{S}^2 \times \mathcal{S}^2 \times \mathcal{H}^2 \times \mathcal{H}^2_{\nu}$. 
 \end{theorem}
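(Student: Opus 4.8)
\emph{Strategy.} Since well-posedness is asserted for an \emph{arbitrary} horizon $T$ under the monotonicity conditions of Assumption~\ref{assume:Monotonicity}, a plain fixed-point/contraction argument cannot work, and I would instead use the \emph{method of continuation} (the monotonicity method of Peng--Wu / Hu--Peng), adapting the jump-free arguments of \cite{zhen1999,LI20141582} to the compensated Poisson measure and invoking the a~priori jump estimates underlying \cite{BE08}. The idea is to embed \eqref{eqn:FBSDEDL} into a one-parameter family of coupled FBSDEs indexed by $\alpha\in[0,1]$, whose coefficients interpolate linearly between those of \eqref{eqn:FBSDEDL} (at $\alpha=1$) and a canonical decoupled, manifestly monotone system solvable in closed form (at $\alpha=0$), augmented by square-integrable affine source terms $(\varphi,\psi)$. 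The goal is to show that the set of $\alpha$ for which the system is solvable for \emph{every} such source is both open and closed in $[0,1]$, hence all of $[0,1]$; specializing to $\alpha=1$ and zero source gives the claim.

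\emph{A priori estimate and uniqueness.} The engine is a stability estimate from It\^o's formula. Given two solutions with data $(\xi,\dots)$ and $(\bar\xi,\dots)$, set $\tilde X=X-\bar X$, $\tilde Y=Y-\bar Y$, and apply It\^o's formula to $t\mapsto\langle G\tilde X_t,\tilde Y_t\rangle$ on $[0,T]$. Taking expectations annihilates the Brownian and compensated-Poisson martingales, and by the very definition of $A$ the drift contribution is exactly $\langle A(t,\pi_t)-A(t,\bar\pi_t),\pi_t-\bar\pi_t\rangle$, to which Assumption~\ref{assume:Monotonicity}(i) applies; the terminal term is $\mathbb{E}\langle g(X_T)-g(\bar X_T),G\tilde X_T\rangle\ge\mu_1\,\mathbb{E}|G\tilde X_T|^2$ by Assumption~\ref{assume:Monotonicity}(ii), and $\langle G\tilde X_0,\tilde Y_0\rangle$ vanishes when $\xi=\bar\xi$. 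This yields
\begin{equation*}
\mu_1\,\mathbb{E}\big|G\tilde X_T\big|^2+\gamma_1\,\mathbb{E}\!\int_0^T\!\big|G\tilde X_t\big|^2\,dt+\gamma_2\,\mathbb{E}\!\int_0^T\!\big|G^{T}\tilde Y_t\big|^2\,dt\;\le\;\mathbb{E}\!\int_0^T\!\mathcal{R}_t\,dt,
\end{equation*}
where $\mathcal{R}_t=\langle G\hat\sigma_t,\tilde Z_t\rangle+\int_{\mathbb{R}^d\setminus\{0\}}\langle G\hat\beta_t(e),\tilde U_t(e)\rangle\,\nu(de)$ collects the Brownian and nonlocal cross-variations, with $\hat\sigma_t=\sigma(t,X_t)-\sigma(t,\bar X_t)$ and $\hat\beta_t(e)=\beta(t,X_{t^-},e)-\beta(t,\bar X_{t^-},e)$. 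According to whether $k>d$, $k<d$ or $k=d$, the sign conditions on $(\gamma_1,\gamma_2,\mu_1)$ force the left-hand side to dominate either $\mathbb{E}|G\tilde X_T|^2$ and $\mathbb{E}\int|G\tilde X|^2$ or $\mathbb{E}\int|G^{T}\tilde Y|^2$; coupled with the standard $L^2$-estimate for the forward SDE (bounding $\sup_t\mathbb{E}|\tilde X_t|^2$) and the intrinsic $L^2$-estimate for the BSDE with jumps (bounding $\|\tilde Z\|^2$ and $\|\tilde U\|^2_\nu$ by $\tilde Y_T$ and the driver differences), this forces $\tilde X\equiv\tilde Y\equiv0$, then $\tilde Z\equiv0$, $\tilde U\equiv0$, giving uniqueness together with a quantitative bound.

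\emph{Continuation step.} Fix $\alpha_0$ at which the family is solvable for all sources. For $\delta>0$ and a given source I would construct the solution at $\alpha_0+\delta$ by Picard iteration, solving at each stage the already-solvable $\alpha_0$-system with the incremental terms frozen as additional sources. The a~priori estimate of the previous step shows that this map is a contraction on $\mathcal{S}^2\times\mathcal{S}^2\times\mathcal{H}^2\times\mathcal{H}^2_\nu$ provided $\delta$ is below a threshold depending only on the Lipschitz constants of Assumption~\ref{assume:Regularity}, on $G$, and on $(\gamma_1,\gamma_2,\mu_1)$ --- crucially \emph{not} on $\alpha_0$. Hence solvability propagates by uniform increments; since the $\alpha=0$ system is solvable, a finite induction reaches $\alpha=1$.

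\emph{Main obstacle.} The principal difficulty, relative to the Brownian case of \cite{zhen1999}, is that the term $\mathcal{R}_t$ is \emph{not} controlled by the monotonicity of $A$, which bears only on the drift: the nonlocal cross-variation $\int\langle G\hat\beta(e),\tilde U(e)\rangle\,\nu(de)$ and the Brownian cross-variation $\langle G\hat\sigma,\tilde Z\rangle$ (present because $\sigma,\beta$ depend on $x$ only) must be absorbed. Closing the estimate therefore requires combining the monotonicity identity with the BSDE-with-jumps $L^2$-estimates, using Assumption~\ref{assume:Regularity}(i) and $\int\rho^2\,d\nu<\infty$ to bound $|\hat\beta_t(e)|$ by $\rho(e)|\tilde X_t|$, and then dispatching the $\tilde Z,\tilde U$ contributions by Young's inequality into the genuinely negative terms on the left --- which is precisely where the case distinction $k\gtrless d$ earns its keep. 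Propagating the $\mathcal{H}^2_\nu$-norm of $U$ through the iteration while keeping the contraction threshold $\delta$ uniform in $\alpha_0$ is the most delicate part of the bookkeeping.
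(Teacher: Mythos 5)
You should first be aware that the paper does not actually prove this theorem: it is quoted as following from \cite{zhen1999}, \cite{LI20141582} and \cite{BE08}, so the relevant comparison is with the method of those references. Your overall architecture --- the Peng--Wu continuation method, with the a priori estimate obtained by applying It\^o's formula to $\langle G\tilde X_t,\tilde Y_t\rangle$ and invoking Assumption \ref{assume:Monotonicity} on the drift vector $A$ and on $g$ --- is indeed exactly the scheme used in those papers, and your identification of the cross-variation terms $\langle G\hat\sigma_t,\tilde Z_t\rangle+\int\langle G\hat\beta_t(e),\tilde U_t(e)\rangle\,\nu(de)$ as the crux is accurate.

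However, your proposed resolution of that crux is a genuine gap: absorbing these terms ``by Young's inequality into the genuinely negative terms on the left'' cannot work for an \emph{arbitrary} horizon. Young's inequality turns the cross terms into $\epsilon\bigl(\|\tilde Z\|^2+\|\tilde U\|_\nu^2\bigr)+\tfrac{C}{\epsilon}\|\tilde X\|^2$, and neither piece has a negative counterpart in the monotonicity inequality: $\|\tilde Z\|^2+\|\tilde U\|_\nu^2$ must itself be bounded via the BSDE estimate by $\mathbb{E}|\tilde X_T|^2$, $\|\tilde X\|^2$ and $\|\tilde Y\|^2$, and $\|\tilde X\|^2$ via the forward estimate by $\|\tilde Y\|^2$, with constants that grow with $T$ and the Lipschitz data. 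Chasing the constants, the resulting inequality forces $\tilde X\equiv\tilde Y\equiv 0$ only when the Lipschitz constants of $\sigma$ and $\beta$ in $x$ are small relative to $(\gamma_1,\gamma_2,\mu_1)$ --- i.e.\ you recover a small-coefficient (or small-time) statement, precisely what the theorem is supposed to transcend. The references close the estimate in a different way: there the monotonicity hypothesis is imposed on the \emph{full} coefficient vector, $A=(-G^{T}f,\,Gb,\,G\sigma,\,G\beta)$ paired against $(\tilde x,\tilde y,\tilde z,\tilde u)$, so the Brownian and Poissonian cross terms are part of the hypothesis itself and the duality estimate closes identically, with no absorption argument at all. (Note, incidentally, that when $f$ is independent of $(z,u)$ and $\sigma,\beta$ depend on $x$, that full condition can only hold if $G(\sigma(t,x)-\sigma(t,\bar x))=0$ and $G(\beta(t,x,e)-\beta(t,\bar x,e))=0$, i.e.\ the cross terms must vanish identically; this is a structural restriction that the paper's reduced Assumption \ref{assume:Monotonicity} silently drops, and your attempt to compensate for it analytically is exactly the step that cannot be repaired.) The same defect propagates to your continuation step, whose uniform-in-$\alpha_0$ contraction threshold relies on the same estimate.
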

We give here an alternative existence and uniqueness result for a fully-coupled FBSDE in small time.

  \begin{theorem}[\textit{Well-posedness in small time}] \label{thm:smallmat}
     Under the Assumptions \ref{assume:Regularity} and \ref{assume:small}, there exists a constant $\delta_0 >0$ only depending on $\rho$, $K$, and the Lipschitz constants of $b$, $\sigma$, and $f$ such that for every $0 \leq \delta \leq \delta_0$, and $\xi \in L^2(\mathcal{F}_{t} ,\mathbb{R}^d)$, FBSDE (\ref{eqn:FBSDEDL}) has a unique  solution $(X_s, Y_s, Z_s, U_s)_{s \in [t, t + \delta]}$ on the time interval $[t, t+\delta]$. 
 \end{theorem}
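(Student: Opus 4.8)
The plan is to prove Theorem \ref{thm:smallmat} via a fixed-point (contraction mapping) argument on a suitable Banach space of solution processes over the short interval $[t,t+\delta]$. This is the standard route for fully-coupled FBSDEs in small time, and it avoids the monotonicity conditions of Assumption \ref{assume:Monotonicity}, relying instead on the linear-growth and Lipschitz structure in Assumptions \ref{assume:Regularity} and \ref{assume:small} together with the smallness of $\delta$ to beat the Lipschitz constants.

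First I would set up the contraction map $\Phi$. Given an input process $(Y,Z,U)$ (or equivalently an input $X$, depending on which formulation is cleaner — here I would fix an input $Y$-trajectory), I solve the forward SDE for $X$ using $Y$ in the drift $b(s,X_s,Y_s)$; since $\sigma$ and $\beta$ satisfy the Lipschitz/growth bounds of Assumptions \ref{assume:Regularity} and \ref{assume:small}, the forward equation is a standard Lipschitz SDE with jumps and admits a unique strong solution $X \in \mathcal{S}^2$ by classical results. Then, feeding this $X$ into the backward equation, I solve the BSDE with terminal condition $g(X_{t+\delta})$ and driver $f(s,X_s,\cdot)$; existence and uniqueness of $(Y',Z',U') \in \mathcal{S}^2 \times \mathcal{H}^2 \times \mathcal{H}^2_\nu$ for this decoupled BSDE with jumps is guaranteed by the Lipschitz assumptions (see e.g.\ \cite{BP97, BE08}). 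The map $\Phi:(Y,Z,U)\mapsto(Y',Z',U')$ has as its fixed point exactly the solution of \eqref{eqn:FBSDEDL}.

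Next I would establish the contraction estimate. Taking two inputs $(Y^1,Z^1,U^1)$ and $(Y^2,Z^2,U^2)$ with corresponding forward solutions $X^1,X^2$ and outputs $(Y'^i,Z'^i,U'^i)$, I apply Itô's formula to $|\tilde X_s|^2$ (where $\tilde X = X^1 - X^2$) and use Gronwall's lemma together with the isometry for the compensated measure $\tilde{\mathcal J}$ to bound $\mathbb{E}[\sup_{s}|\tilde X_s|^2]$ by a constant times $\delta$ times the norm of $\tilde Y = Y^1 - Y^2$. A symmetric computation on the BSDE side, using the standard a priori estimates for BSDEs with jumps and the Lipschitz constant of $f$ and $g$, bounds the output difference $\|\tilde Y'\|^2_{\mathcal{S}^2} + \|\tilde Z'\|^2 + \|\tilde U'\|^2_\nu$ by $C\delta$ times the input difference. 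Choosing $\delta_0$ small enough that $C\delta_0 < 1$, with $C$ depending only on $\rho$, $K$ and the Lipschitz constants of $b,\sigma,f$ (as asserted in the statement), makes $\Phi$ a strict contraction, and the Banach fixed-point theorem yields the unique solution.

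The main obstacle I anticipate is the careful bookkeeping of the jump terms in the a priori estimates: one must track the $\mathcal{H}^2_\nu$-norm contributions through both the Itô formula applied to the forward difference (where the jump coefficient $\beta$ enters via the isometry $\mathbb{E}\int_t^{t+\delta}\int |\beta(s,X^1_{s^-},e)-\beta(s,X^2_{s^-},e)|^2\nu(de)\,ds$, controlled by $\rho$) and the backward a priori estimate (where $\tilde U$ must be estimated jointly with $\tilde Z$). Ensuring that the resulting constant $C$ is genuinely independent of $\delta$ — so that smallness of $\delta$ alone delivers the contraction — requires keeping the Gronwall constants and the jump-measure integrability uniform, which is where the finiteness $\int \rho^2(e)\,\nu(de) < \infty$ from Assumption \ref{assume:small} is essential. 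The restriction $k=1$ in Assumption \ref{assume:small} simplifies the matrix structure but does not materially change the argument.
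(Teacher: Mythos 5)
Your fixed-point scheme is, in substance, the argument this theorem actually rests on: the paper itself gives no proof of Theorem \ref{thm:smallmat}, but imports it from \cite{zhen1999}, \cite{LI20141582} and \cite{BE08}, where small-time well-posedness of coupled FBSDEs with jumps is obtained exactly as you describe --- solve the forward SDE from an input $Y$, solve the resulting decoupled Lipschitz BSDE with jumps, and show the composed map is a strict contraction on $\mathcal{S}^2\times\mathcal{H}^2\times\mathcal{H}^2_\nu$ once $\delta$ is small. Your bookkeeping of where the smallness comes from is also right: the factor $\delta$ is produced entirely by the forward estimate $\mathbb{E}[\sup_{s}|\tilde X_s|^2]\le C_1\,\delta\,\|\tilde Y\|^2$ (since $\tilde X_t=0$ and $\tilde Y$ enters only through the drift), while the backward stability estimate contributes a factor with no smallness attached.

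The one genuine flaw is your final claim about what the contraction constant depends on. The backward estimate is dominated by the terminal term $\mathbb{E}\,|g(X^1_{t+\delta})-g(X^2_{t+\delta})|^2\le L_g^2\,\mathbb{E}[\sup_s|\tilde X_s|^2]$, so it necessarily carries the Lipschitz constant $L_g$ of $g$ (you invoke it yourself), and yet you then assert that $C$ depends only on $\rho$, $K$ and the Lipschitz constants of $b,\sigma,f$, ``as asserted in the statement''. Your argument does not deliver that independence, and no argument can: take $d=k=1$, $\xi=0$, $\sigma\equiv 0$, $\beta\equiv 0$, $f\equiv 0$, $b(t,x,y)=y$, $g(x)=\sin(\lambda x)$. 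All the constants appearing in Assumptions \ref{assume:Regularity} and \ref{assume:small} are then bounded uniformly in $\lambda$ (one may take $K=1$, $L_b=1$, $L_\sigma=L_f=\rho=0$), but as soon as $\delta\lambda>1$ the FBSDE on $[0,\delta]$ has multiple solutions: any root $u^{*}$ of $u=\delta\sin(\lambda u)$ gives the solution $X_s=u^{*}s/\delta$, $Y_s\equiv u^{*}/\delta$, $Z\equiv U\equiv 0$, and for $\delta\lambda>1$ nonzero roots exist alongside $u^{*}=0$. Hence $\delta_0$ must shrink as $L_g$ grows; the correct conclusion of your proof --- and the correct reading of the theorem, whose omission of $L_g$ is an imprecision already present in the paper's statement (note that $K$ bounds only the growth of $g$, not its Lipschitz constant) --- is that $\delta_0$ depends on $\rho$, $K$ and the Lipschitz constants of $b,\sigma,f$ \emph{and} $g$.
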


  Let us introduce the decoupling field:
 $$ u(t, x) = Y_{s}^{t, x}|_{s = t}, \quad (t, x) \in [0, T] \times \mathbb{R}^d,$$
 where $Y^{t, x}$ is the solution of the FBSDE \eqref{eqn:FBSDEDL} with the initial condition $X_t = x$. Using the Markov property of the forward component $X$  of the system \eqref{eqn:FBSDEDL} and the continuity of the function $u$ with respect to $x$, it is shown in \cite{LI20141582} that, under the above assumptions, for any $(t, \xi) \in [0,T] \times L^2(\mathcal{F}_t, \mathbb{R}^d)$, we have 
\begin{align}\label{decoupl}
Y^{t,\xi}_s = u(s, X_s), \,\, t \leq s \leq T,  \quad \mathbb{P}-{\rm a.s.}
\end{align}
where $X$ is the solution of the SDE with initial state $\xi$ at time $t$ and $Y^{t,\xi}$ the associated backward component  of the FBSDE system \eqref{eqn:FBSDEDL}. Furthermore, under the given assumptions on the coefficients, $u$ is uniformly Lipschitz, and has linear growth with respect to $x \in \mathbb{R}^d$. Finally, by the assumption \eqref{assume:time}, we recover the $\frac{1}{2}$-Hölder continuity of the decoupling field $u$ with respect to time which is essential for the discrete approximation discussed later.
We also have the following representation for the component $U$ of the solution:
for all $(t, e) \in[0,T]\times \mathbb{R}^d \setminus\{0\}$, $$U_t(e) = u(t, X_{t^{-}} + \beta(t, X_{t^{-}}, e)) - u(t, X_{t^-}), \quad \mathbb{P} - \text{a.s.}$$

The second part of the preliminaries concentrates on a brief introduction to neural networks.\\

\underline{\noindent \textit{Neural networks.}} We consider a feedforward neural network, denoted by $\Phi^{\theta}$, which approximates the processes of interest. Let $d_0$ be the input dimension, and $d_1$ be the output dimension. We fix an integer $L \geq 2$ to represent the total number of layers, including the input and output layers. We define $m$ to be the number of neurons on each hidden layer, and for simplicity, we set $m_0 = d_0$ and $m_{L - 1} = d_1$. 

The feedforward neural network is defined as the composition of affine transformations and nonlinear activation functions. Specifically, we have:
\begin{equation*}
\Phi^{\theta} = A_{L} \circ \sigma_{a} \circ A_{L-1} \circ \cdots \circ \sigma_{a} \circ A_{1},
\end{equation*}
where $\sigma_a$ is a component-wise activation function,  $A_1$ is a mapping  from $\mathbb{R}^{d_0}$ to $ \mathbb{R}^m$, $A_L$ is a mapping from $\mathbb{R}^m$ to $\mathbb{R}^{d_1}$ and for $l =2$ to $L-1$,  $A_l$ is a mapping from  $\mathbb{R}^m$ to $\mathbb{R}^m$. 

We represent each affine function $A_l$ as $A_l(x) = W_l x + \beta_l$, where $W_l$ is a matrix of weights and $\beta_l$ is a vector of biases.

The neural network has parameters $\theta$, which include all the weights and biases of the affine functions. The total number of parameters is $N^L_{d_0, m,d_1} = (d_0 +1)m+(L-2)m(1+m)+(m+1)d_1$, 
where $m$ is the number of neurons on each hidden layer.

We denote by $\mathcal{N}\mathcal{N}_{\infty}$ the set of such functions $\Phi^{\theta}$. To restrict the number of neurons per layer, we introduce $\mathcal{N}\mathcal{N}_{p}$ the set of neural networks with at most $p \in \mathcal{N}$ neurons per hidden layer and $L-1$ hidden layers. We recall here the two following approximation theorems.
\begin{theorem}[Universal Approximation Theorem]
Assume that the function $\sigma_{a}$ is non constant and bounded. Let $\mu$ denote a probability measure on $\mathbb{R}^{d}$, then for any $L \geq 2$, $\mathcal{N}\mathcal{N}_{\infty}$ is dense in $L^{2}(\mathbb{R}^{d},\mu)$.
\end{theorem}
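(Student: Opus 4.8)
The plan is to follow the classical duality argument of Cybenko together with its extension to merely bounded, non-constant activations due to Hornik. I would first make two reductions. Since $L^2(\mathbb{R}^d,\mu)$-density of the $\mathbb{R}^{d_1}$-valued networks follows coordinate by coordinate, it suffices to treat the scalar output case $d_1 = 1$. Moreover, it is enough to establish the statement for a single nonlinear layer (the case $L=2$), the remaining depths $L>2$ being handled by the standard observation that a deeper architecture reproduces a shallower one up to arbitrary accuracy, its extra affine layers being approximable on the relevant compact ranges. Thus the core task is to show that the linear span
\[
\Sigma := \mathrm{span}\{\, x \mapsto \sigma_a(\langle w, x\rangle + b) : w \in \mathbb{R}^d,\ b \in \mathbb{R}\,\}
\]
is dense in $L^2(\mathbb{R}^d,\mu)$.

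Since $L^2(\mu)$ is a Hilbert space, I would argue by contradiction through orthogonal projection (Hahn--Banach): if $\overline{\Sigma} \neq L^2(\mu)$, there is some $h \in L^2(\mu)$ with $h \neq 0$ that is orthogonal to $\Sigma$, i.e.
\[
\int_{\mathbb{R}^d} \sigma_a(\langle w, x\rangle + b)\, h(x)\, \mu(dx) = 0 \qquad \text{for all } w \in \mathbb{R}^d,\ b \in \mathbb{R}.
\]
The objective is then to show that this relation forces $h = 0$ $\mu$-almost everywhere, contradicting $h \neq 0$; equivalently, that a bounded non-constant $\sigma_a$ is \emph{discriminatory} for the probability measure $\mu$.

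Because $\mu$ is a probability (hence finite) measure, $h \in L^2(\mu) \subset L^1(\mu)$ and the signed measure $h\,d\mu$ is finite, so by injectivity of the Fourier transform on finite measures it suffices to prove that, for every $w \in \mathbb{R}^d$, the real modes $x \mapsto \cos(\langle w, x\rangle)$ and $x \mapsto \sin(\langle w, x\rangle)$ lie in $\overline{\Sigma}$: orthogonality of $h$ to all such modes then yields $\widehat{h\,d\mu} \equiv 0$ and hence $h = 0$. Approximating these modes reduces, upon composing with the one-dimensional projection $t = \langle w, x\rangle$, to a one-dimensional statement: the closed span of the translates and dilates $\{\, t \mapsto \sigma_a(\lambda t + b)\,\}$ contains $\cos$ and $\sin$. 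Here I would exploit that $\sigma_a$ is bounded and non-constant: convolving $\sigma_a$ with a smooth compactly supported mollifier $\phi$ produces a smooth, bounded, non-constant function $\sigma_a * \phi$ that lies in the closed span of translates of $\sigma_a$ (as an $L^2$-limit of Riemann sums), and whose smoothness and non-constancy permit isolating, through dilation and Fourier analysis, the trigonometric modes needed to build $\cos$ and $\sin$.

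The main obstacle is precisely this last step, the discriminatory / one-dimensional density property under the weak hypothesis that $\sigma_a$ is only bounded and non-constant. For a continuous sigmoidal activation one may pass to the limit $\lambda \to \infty$ to recover indicator/step functions and conclude by a direct measure-theoretic argument (Cybenko); under the present weaker assumption one must instead run the mollification-and-dilation scheme above (Hornik) to generate the trigonometric modes. The remaining ingredients, namely the Hilbert-space projection, the coordinatewise reduction to $d_1 = 1$, and the Fourier-injectivity completeness of $\{\cos(\langle w,\cdot\rangle),\sin(\langle w,\cdot\rangle)\}$ in $L^2(\mu)$, are routine.
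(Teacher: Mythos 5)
The first thing to say is that the paper itself contains no proof of this statement: it is one of two classical universal approximation theorems \emph{recalled} in the preliminaries (it is Hornik's 1991 theorem, quoted in this form throughout the deep BSDE literature), so there is no internal argument to compare yours against, and your proposal must be judged on its own. Its architecture is indeed the standard Cybenko--Hornik duality scheme, and your two reductions are sound. On the depth reduction, it is worth making explicit why it is legitimate under exactly these hypotheses: because $\sigma_a$ is \emph{bounded}, the pushforward of $\mu$ under the first hidden layer is a probability measure with bounded support in $\mathbb{R}^m$, the affine map applied by the shallow network is then square-integrable for this pushforward, and the $L=2$ case in input dimension $m$ lets the extra layers approximate it; induction on $L$ finishes. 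No continuity of $\sigma_a$ is needed for this, but boundedness is, and your phrase ``approximable on the relevant compact ranges'' is silently using it.

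The genuine gap is in the discriminatory step, which you correctly flag as the main obstacle but then compress into one sentence (``smoothness and non-constancy permit isolating, through dilation and Fourier analysis, the trigonometric modes''). That sentence is the entire content of Hornik's theorem, so as written the proof reduces a hard statement to an equally hard one. Worse, the primal formulation you chose has a step that can actually fail: you want $\sigma_a * \phi$ to lie in the $L^2$-closed span of translates of $\sigma_a$ ``as an $L^2$-limit of Riemann sums'', but $\sigma_a$ is only bounded and measurable and $\mu$ (hence each one-dimensional marginal $\mu_w$) may be singular with respect to Lebesgue measure; in that case translation $s \mapsto \sigma_a(\cdot - s)$ is not continuous in $L^2(\mu_w)$ and the Riemann sums need not converge in that norm. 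The standard repair is to argue dually, on the annihilator: letting $\tau$ be the image of the finite signed measure $h\,d\mu$ under $x \mapsto \langle w, x\rangle$, the orthogonality hypothesis reads $\int \sigma_a(\lambda t + b)\,d\tau(t)=0$ for all $\lambda, b$, and Fubini (valid since $\tau$ is finite and $\sigma_a$ bounded) yields $\int (\sigma_a*\phi)(\lambda t + b)\,d\tau(t)=0$ for every mollifier $\phi$ --- no approximation of $\sigma_a*\phi$ by network functions is ever needed. One then chooses $\phi$ so that $F=\sigma_a*\phi$ is non-constant (possible precisely when $\sigma_a$ is not a.e.\ constant; note this is how ``non-constant'' must be read for the theorem to be true at all, since $\sigma_a=\mathds{1}_{\{0\}}$ is bounded and non-constant yet generates networks vanishing Lebesgue-a.e.), observes that $F$ convolved with every dilate of $\tau$ vanishes identically, and concludes by a distributional Fourier support argument (a bounded function whose Fourier transform is supported in $\{0\}$ is constant) that $\widehat{\tau}$ vanishes off the origin, hence everywhere, hence $\tau=0$ and $h=0$. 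With this dual execution your outline closes; without it, both the Riemann-sum step and the unproved key lemma leave the proof incomplete.
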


\begin{theorem}[Universal Approximation Theorem]
Assume that the function $\sigma_{a}$ is non constant, bounded and a continuous function, then when $L = 2$, $\mathcal{N}\mathcal{N}_{\infty}$ is dense in $C(\mathbb{R}^{d})$ for the topology of the uniform convergence on compact sets.
\end{theorem}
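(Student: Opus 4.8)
The statement is the classical universal approximation theorem of Cybenko and Hornik for a single hidden layer ($L=2$), and the plan is to run the standard duality argument. Fix a compact set $K\subset\R^d$; it suffices to show that the linear subspace $S\subset C(K)$ of functions $x\mapsto c_0+\sum_{j=1}^m c_j\,\sigma_a(w_j\cdot x+b_j)$ (arbitrary width $m$, with $w_j\in\R^d$ and $b_j,c_j\in\R$) is dense in $C(K)$ for the sup-norm; uniform convergence on all compacts then follows by exhausting $\R^d$. Note that $S$ really is a linear subspace — concatenating the hidden units of two networks realizes their sum, and scaling the output weights realizes scalar multiples — and that it contains the constants through the output bias $c_0=\beta_2$.

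First I would argue by contradiction via Hahn--Banach: if $\overline{S}\neq C(K)$, there is a nonzero continuous linear functional on $C(K)$ vanishing on $\overline{S}$, which by the Riesz representation theorem is integration against a nonzero finite signed Borel measure $\mu$ on $K$. The annihilation property then reads
$$\int_K \sigma_a(w\cdot x+b)\,\di\mu(x)=0\quad\text{for all }w\in\R^d,\ b\in\R,$$
together with $\int_K \di\mu=0$ coming from the constants in $S$. The whole proof reduces to showing that these relations force $\mu=0$, which is the contradiction establishing density.

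Next I would reduce to one dimension. For a fixed unit vector $u$, taking $w=\lambda u$ and pushing $\mu$ forward under $x\mapsto u\cdot x$ yields a finite signed measure $\mu_u$ on $\R$ with $\int_\R \sigma_a(\lambda s+b)\,\di\mu_u(s)=0$ for every dilation $\lambda$ and translation $b$. The crux of the argument is to show that a bounded, non-constant, continuous $\sigma_a$ is \emph{discriminatory} in this sense, i.e. that the only finite signed measure annihilating every dilate and translate of $\sigma_a$ is the zero measure. The cleanest route is Fourier-analytic: viewing $\sigma_a$ as a tempered distribution, the vanishing of the convolution $b\mapsto\int_\R\sigma_a(s+b)\,\di\mu_u(s)$ gives $\widehat{\sigma_a}\cdot\widehat{\mu_u}=0$; because $\sigma_a$ is non-constant its distributional Fourier transform is not supported at the origin alone, and letting $\lambda$ range over all reals sweeps out every nonzero frequency, forcing $\widehat{\mu_u}$ to vanish off the origin, while the constants condition kills the remaining atom at $0$. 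Hence $\widehat{\mu_u}\equiv0$, so $\mu_u=0$ for every direction $u$.

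Finally, since $\int_K e^{i\,w\cdot x}\,\di\mu(x)$ depends only on the projection of $\mu$ onto the direction of $w$, the vanishing of all $\mu_u$ shows that the characteristic function of $\mu$ is identically zero, whence $\mu=0$ by uniqueness of the Fourier transform of finite measures — the desired contradiction. I expect the genuine difficulty to sit entirely in the discriminatory/Tauberian step: $\sigma_a$ is only bounded, not integrable, so the convolution and Fourier manipulations must be carried out in the space of tempered distributions, and one must argue carefully that non-constancy of a bounded continuous function prevents its Fourier transform from being a multiple of the Dirac mass at the origin. An alternative, slightly more constructive packaging of the same core instead reduces to approximating each $x\mapsto\cos(w\cdot x)$ and $x\mapsto\sin(w\cdot x)$ — which span a dense subalgebra of $C(K)$ by the complex Stone--Weierstrass theorem — thereby isolating the same one-dimensional density fact as the only nontrivial ingredient.
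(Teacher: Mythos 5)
The paper offers no proof of this statement: it is recalled, without argument, as one of the two classical universal approximation theorems (this version, for a continuous, bounded, non-constant activation and $L=2$, is Hornik's theorem, refining Cybenko's duality argument), so there is no internal proof to compare yours against. Judged on its own, your proposal is a correct sketch of exactly that classical proof: Hahn--Banach plus Riesz representation reduces density in $C(K)$ to showing that any finite signed Borel measure $\mu$ on $K$ annihilating all maps $x \mapsto \sigma_a(w\cdot x + b)$ and the constants must vanish; projecting onto lines reduces the problem to dimension one; the Fourier-distributional argument shows a bounded, non-constant, continuous $\sigma_a$ is discriminatory; and Cram\'er--Wold recovers $\mu = 0$, with the exhaustion by compacts handling the topology of uniform convergence on compact sets. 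Two details deserve the care you yourself flag. First, the product $\widehat{\sigma_a}\cdot\widehat{\mu_u}$ of a tempered distribution with the transform of a measure is only legitimate here because $\mu_u$ is \emph{compactly supported} (it is the pushforward of a measure living on the compact $K$), so $\widehat{\mu_u}$ is an entire, smooth, slowly growing multiplier; this is worth saying explicitly, since for a general finite measure the product would be ill-defined. Second, ``$\widehat{\sigma_a}$ supported at the origin'' means a finite linear combination of $\delta_0$ \emph{and its derivatives}, i.e. $\sigma_a$ is a polynomial, and it is boundedness that then forces constancy --- slightly more than ``a multiple of the Dirac mass'', though the conclusion you draw is the right one. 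With those points made precise the argument is complete, and the alternative Stone--Weierstrass packaging you mention (approximating $x\mapsto\cos(w\cdot x)$ and $x\mapsto\sin(w\cdot x)$) is an equally standard correct variant resting on the same one-dimensional fact.
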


\subsection{Deep Learning algorithms}\label{alg}
\quad\,  We introduce here five \textit{deep learning algorithms} to solve the coupled system of forward-backward SDEs with jumps \eqref{sto} in the case of jumps with finite activity (i.e. $\lambda = \int_{\mathbb{R}^d \setminus \{0\}} \nu(de)< \infty$).



Let us first define the L\'evy process $J$ associated with the Poisson random measure $\mathcal{J}$, which is given, for $0 \leq t \leq T$, by
\begin{align}\label{jump}
J_t:=\int_0^t \int_{\mathbb{R}^d \setminus \{0\}} e \mathcal{J}(ds,de).
\end{align}
and 
$\Delta J_t:=J_t-J_{t^-}$, for all $t>0$.
We also introduce the following Poisson process, denoted by $N_t$:
\begin{align*}
N_t:=\int_0^t \int_{\mathbb{R}^d \setminus \{0\}}  \mathcal{J}(ds,de).
\end{align*}
The Poisson Process $(N_t)$ has the intensity $\lambda t$.

By defining  the map $\bar{b}(t,x,y):=b(t,x,y)-\int_{\mathbb{R}^d \setminus \{ 0 \}} \beta(t,x,e) \nu(de)$, we observe that the FBSDE \eqref{eqn:FBSDEDL} system can be written as:
\begin{align}\label{eqn:FBSDEDL1}
\begin{cases}
    dX_{t} =  \bar{b}(t,X_{t},Y_{t})dt + \sigma(t,X_{t}) dW_{t} + \int_{\mathbb{R}^{d}\setminus \{ 0 \}} \beta (t,X_{t^-}, e) \mathcal{J}(dt,de), \\
    dY_{t} = -f(t,X_{t},Y_{t}) dt + Z_{t}dW_{t} + \int_{\mathbb{R}^{d}\setminus \{ 0 \}} U_{t}(e) \mathcal{\tilde{J}}(dt,de) \red{,}\\
    X_{0} = \xi, \quad Y_{T} = g(X_{T}) \red{.}
    \end{cases}
    t \in [0,T],
\end{align}

\begin{remark}
    The numerical approximation of the compensator of the jump part of the forward component in the drift $\bar{b}$ can be done through numerous methods. For example, explicit integration with respect to the intensity measure, Monte Carlo estimation, or the methods used in \cite{PIDE05volt}.
\end{remark}

By using \eqref{decoupl}, the FBSDE system \eqref{eqn:FBSDEDL1} reads as follows:
\begin{align}\label{eqn:FBSDEDL2}
\begin{cases}
    X_{t} =  \xi+ \int_0^t \bar{b}(s,X_{s},u(s,X_s))ds + \int_0^t \sigma(s,X_{s}) dW_{s} + \int_0^t\int_{\mathbb{R}^{d}\setminus \{ 0 \}} \beta (s,X_{s^-}, e) \mathcal{J}(ds,de), \\
    u(t, X_t) = g(X_T)-\int_t^T f(s,X_{s},u(s,X_s)) ds + \int_t^TZ_{s}dW_{s} + \int_t^T \int_{\mathbb{R}^{d}\setminus \{ 0 \}} U_{s}(e) \mathcal{\tilde{J}}(ds,de).\\
    \end{cases}
    t \in [0,T].
\end{align}
\paragraph{\textit{Discrete-time approximation}.}
Let us consider a uniform time grid $\pi:=\{t_{0}, t_{1}, ..., t_{M} \}$ where $t_{i} := i \frac{T}{M}$ for $i \in \{0, 1, ..., M\}$ and $\Delta t_i := t_{i+1} - t_{i}$ represents the constant time step size. We also define the Brownian increment $\Delta W_i$ as $\Delta W_i := W_{t_{i+1}} - W_{t_{i}}$ and the Poisson increment $\Delta N_{i} := N_{t_{i + 1}} - N_{t_{i}}$, which follows a Poisson distribution with mean $\lambda \Delta t_{i}$. Finally, for a fixed 
\ $i \in \{0,1,\ldots,M-1\}$, denote by $(\Delta J^i_l)_{l \in [1, \Delta N_{i}]}$  the $l$th jump of the process $(J_t)$ given by \eqref{jump} which occurs on the time interval $]t_{i}, t_{i+1}]$.\\

\noindent To give the intuition about the approximation of the backward SDE of the system \eqref{eqn:FBSDEDL2} and, in particular, about the treatment of the jump part, we  first introduce the following continuous-time process $(\bar{X}^{\pi}_t)_{t \in [0,T]}$:
 $$ \bar{X}_t^{\pi} := X_{t_i} + \int_{t_i}^t\int_{\mathbb{R}^{d}\setminus \{ 0 \}} \beta(t_i, \bar{X}_{t_i}^{\pi},e) \Jc(ds,de), \quad \forall t \in [t_i, t_{i + 1}[, \forall i \in [\lvert 0, M-1\lvert]\red{,}$$
and the process $(\bar{U}_t^\pi)$ which is defined as follows
 $$\bar{U}^\pi_t(e):=u(t_i,\bar{X}^{\pi}_{t^-}+\beta(t_i, \bar{X}^{\pi}_{t_{i}},e))-u(t_i,\bar{X}^{\pi}_{t^-}), \quad \forall t \in [t_i, t_{i + 1}[, \forall i \in [\lvert 0, M-1\lvert].$$
 We can observe that, for a number of time steps $M$ sufficiently large, we have the following approximation:
 \begin{align*}
 u(t_{i}, X_{t_i})  \approx  u(t_{i+1}, X_{t_{i + 1}}) + f(t_i, X_{t_i}, u(t_i, X_{t_i}))\Delta t_{i}  - \bar{Z}^{\pi}_i \Delta W_i- \int_{t_i}^{t_{i+1}}\int_{\mathbb{R}^d \setminus \{0\}} \bar{U}^\pi_s(e)\Tilde{\mathcal{J}}(ds,de),
 \end{align*}
with $\bar{Z}^{\pi}_i:=\frac{1}{\Delta t_i}\mathbb{E}[\int_{t_i}^{t_{i+1}} Z_s ds \lvert \Fc_{t_{i}}] \approx \mathbb{E}\left [u(t_{i+1},X_{t_{i+1}}) \frac{\Delta W_{i}}{\Delta t_i}|\mathcal{F}_{t_i} \right] $. Note that the integral of $(\bar{U}^\pi_t)$ with respect to the Poisson measure $\mathcal{J}$ admits the  representation:
\begin{align*}
        \int_{t_i}^{t_{i+1}}\int_{\mathbb{R}^d \setminus \{0\}} \bar{U}^\pi_s(e)\mathcal{J}(ds,de) &=\int_{t_i}^{t_{i+1}} \int_{\mathbb{R}^d \setminus \{0\}} u(t_i,\bar{X}^{\pi}_{s^-}+\beta(t_i, \bar{X}^{\pi}_{t_{i}},e))-u(t_i,\bar{X}^{\pi}_{s^-}) \mathcal{J}(ds,de) \\
      \text{(by definition of $\bar{X}^{\pi}$)}  &= \displaystyle \sum \limits ^{\Delta N_{i}}_{k=1} u(t_i, X_{t_i} + \displaystyle \sum \limits ^{k}_{l=1} \beta(t_i, X_{t_i}, \Delta J_l^{i})) - u(t_i, X_{t_i} + \displaystyle \sum \limits ^{k-1}_{l=1} \beta(t_i, X_{t_i}, \Delta J_l^{i})),\\
        &= u(t_i, X_{t_i} + \displaystyle \sum \limits ^{\Delta N_{i}}_{l=1} \beta(t_i, X_{t_i}, \Delta J_l^{i})) - u(t_i, X_{t_i}).
    \end{align*}
\noindent For simplicity, we well denote from now on $X_{t_i}$ by $X_{i}$. By using the Euler scheme to approximate the solution $X_t$ of the SDE, we are led to the following discrete time approximation of the solution of the FBSDE system \eqref{eqn:FBSDEDL1}:
\begin{align}\label{eqn:discrete1}
\begin{cases}
      X^{\pi}_{i+1} =  X^{\pi}_{i} + \bar{b}(t_i,X^{\pi}_i, u(t_{i}, X_{i}^\pi)) \Delta t_{i}  + \sigma (t_i,X^{\pi}_i)   \Delta W_i + \displaystyle \sum \limits ^{\Delta N_{i}}_{l=1} \beta(t_i, X_i^{\pi}, \Delta J_l^{i}), \\
      u(t_{i}, X_{i}^\pi)  \approx  u(t_{i+1}, X^\pi_{i+1}) + f(t_i,X^{\pi}_{i} , u(t_{i}, X_{i}^\pi))\Delta t_{i}  - Z^{\pi}_i \Delta W_i
      - \left(u(t_i, X^{\pi}_i + \displaystyle \sum \limits ^{\Delta N_{i}}_{l=1} \beta(t_i, X^{\pi}_i, \Delta J_l^{i})) - u(t_i, X^{\pi}_i)\right) \\ + \mathbb{E} \left [ u(t_i, X^{\pi}_i + \displaystyle \sum \limits ^{\Delta N_{i}}_{l=1} \beta(t_i, X^{\pi}_i, \Delta J_l^{i})) - u(t_i, X^{\pi}_i) \Big |\mathcal{F}_{t_i} \right], \\
            Z^{\pi}_{i} = \mathbb{E}\left [u(t_{i+1},X^\pi_{i+1}) \frac{\Delta W_{i}}{\Delta t_i}|\mathcal{F}_{t_i} \right],\\
      X^{\pi}_0 = \xi, \quad u(t_M, X^\pi_{M}) = g(X^{\pi}_M),\\
      i = 0, \cdots, M-1.
\end{cases}
\end{align}


\begin{remark}

\begin{itemize}
\item The approximation proposed here for the jumps part for the backward component is different from the one proposed in \cite{Gnoatto22} and is particularly well-suited for the deep learning framework. In this approach, the neural networks deal with the sum of the jumps rather than handling each jump individually which can be problematic for large intensities given a small number of time steps. The numerical tests that we have conducted show the robustness of this approximation. \footnote{The full convergence of the algorithms proposed in this paper  will be  provided in an upcoming paper.} 
\item Regarding the approximation of the component $Z_{i}^\pi$, we use the representation provided in \eqref{eqn:discrete1}, which allows to express the $Z^\pi_t$ component  as a function $v(t, X^\pi_t)$ and to approximate it by a neural network.
 \item To handle the case when the Poisson increment $\Delta N$ equals $0$, we adopt the convention $\displaystyle \sum \limits_{l = 1}^{0} \phi_l := 0$, where $(\phi)_l$ represents a sequence of random variables.
   
\end{itemize}

\end{remark}
\paragraph{\textit{From the discrete-time FBSDE to neural networks}.} 
We denote by $\Uc^{\theta}$ the network function approximating the decoupling field $u$, $\Zc^{\theta}$ the network function approximating the deterministic map $t,x \mapsto v(t,x)$ which is used to represent the process  $Z$ (see the remark above), and $\Wc^{\theta}$ the network function  approximating the function $t,x,y \to u(t, x + y) - u(t, x)$. We present two families of algorithms: one is based on the representation $\eqref{eqn:discrete1}$ (below denoted by first class of algorithms) and the second one relies on the regression methods (denoted by second class of algorithms).

\subsubsection{First class  of deep-learning algorithms.}
In this part, we introduce the deep-learning algorithms based on the representation $\eqref{eqn:discrete1}$ (with possibly two variants depending on the algorithm).\\

\noindent \textbf{1. Global solver}. This algorithm  extends to the case of jumps and fully coupled setting the \textit{Deep BSDE} solver developed in \cite{han2018solving}, where each neural network takes  $t$ (i.e. time) as input (see \cite{chan2019machine}).  In our setting, we use  three networks:  $\Yc^{\theta}$ to approximates the initial condition of the backward component, $\Zc^\theta$ to approximate the control $Z$ and $\Wc^{\theta}$  to approximate the jump part in equation \eqref{eqn:discrete1} leading to 
\begin{align*}
Y_{i+1}^{\pi} \approx  Y_i^{\pi} -f(t_i,  X_i^{\pi}, Y_i^{\pi}) \Delta t_i + \Zc^\theta(t_i, X_i^{\pi}) \Delta W_i  +
   \Wc^\theta(t_i, X_i^{\pi} , \sum_{l = 1}^{\Delta N_{i}} \tilde{\beta}_i(\Delta J^{i}_l))    - 
  \mathbb{E} \left[
   \Wc^\theta(t_i, X_i^{\pi},  \sum_{l = 1}^{\Delta N_{i}}  \tilde{\beta}_i(\Delta J^{i}_l)) \big \lvert \mathcal{F}_{t_i} \right],
\end{align*}
where $\tilde{\beta}_i(\Delta J^{i}_l) = \beta(t_i, X_i^{\pi}, \Delta J^{i}_l)$.
Notice that the network $\Wc^{\theta}$ has to depend on $t$, $X^{\pi}$, and $\sum_s \tilde{\beta}_s(\Delta J_s)$. 

Observe that we have the following result to compute the conditional expectation by means of Monte Carlo on each trajectory of the batch:
\begin{align*}
    &\mathbb{E} \left[\Wc^\theta(t_i, X_i^{\pi},  \sum_{l = 1}^{\Delta N_{i}}  \tilde{\beta}_i(\Delta J^{i}_l)) \big \lvert \mathcal{F}_{t_i} \right] = \Theta(t_i,X_i^{\pi}),\\
    \text{where} \qquad &\Theta(t,x) =  \mathbb{E} \left[ \Wc^\theta(t, x, \sum_{l = 1}^{\Delta N_{i}}  \beta(t,x,\Delta J^{i}_l)) \right], \quad \forall (t,x) \in [0,T] \times \mathbb{R}^d.
\end{align*}    
Thus, we choose small batch sizes during the gradient descent in order to estimate the compensator with a large number of Monte Carlo simulations for each sample of the batch.

Let $ \theta = (\theta_0,\theta_1, \theta_2)$, where $\theta_0 \in \R^{N^L_{d,m,k}}$ are the parameters of the network function $\Yc^{\theta_0}$, $\theta_1 \in \R^{N^L_{d+1,m,kd}}$, are the parameters of the network function $\Zc^{\theta_1}$ 
, $\theta_2 \in \R^{N^L_{2d+1,m, k}}$ are the parameters of the network function  $\Wc^{\theta_2}$. 
This method consists in training the neural networks by solving in a forward way the backward representation of the solution, i.e. instead of solving the BSDE starting from the terminal condition, one estimates $Y_{0}$ with $\Yc^{\theta_0}(\xi)$ and solves the forward optimization problem with the aim of minimizing $\mathbb{E} \left [ \lvert Y_{T} - g(X_{T})\rvert^{2} \right]$. The \textit{Global solver}  is detailed in Algorithm \ref{alg:globalDirac}.

\begin{algorithm}[H]
\For{$ m = 0, \dots, K$}{
$\forall j \in [|1,B|]$  sample $\xi_j$ from the law of $\xi$, and set $X_{0}^{j}(\theta) =\xi_j$, $Y_{0}^{j}(\theta)  = \Yc^{\theta_0}(\xi_j) $ \;  
\For{$i= 0, \dots, M-1$}
{\For{$j = 0, \dots, B$ }
{
Sample $\Delta W_i^j$ from a Gaussian vector, sample $\Delta N^{j}_{i}$ from a Poisson distribution $\mathcal{P}\left (\lambda \Delta t_{i} \right)$ and sample each element of the jumps sequence $(\Delta J_l^{i,j})_{l = 1, \cdots, \Delta N^{j}_{i}}$ from the distribution  $\frac{\nu(de)}{\lambda} \mathds{1}_{\mathbb{R}^d \setminus \{0\}}$. 
\begin{align*}
      X_{i+1}^{j}(\theta) = & X_{i}^{j}(\theta) + \bar{b}(t_i,X_i^{j}(\theta),Y_{i}^{j}(\theta)) \Delta t_{i}  + \sigma  (t_i,X_i^j(\theta))   \Delta W_i^j 
      + \sum_{l = 1}^{\Delta N^{j}_{i}} \beta (t_i, X_i^j(\theta), \Delta J_l^{i,j}) \nonumber \\ 
\end{align*}
}
\For{$k = 0, \dots, A$ }{Sample $\Delta \bar N^{k}_{i}$ from a Poisson distribution $\mathcal{P}\left (\lambda \Delta t_{i} \right)$ and sample each element of the jumps sequence $(\Delta \bar J_l^{i,k})_{l = 1, \cdots, \Delta  \bar N^{k}_{i}}$ from the distribution  $\frac{\nu(de)}{\lambda} \mathds{1}_{\mathbb{R}^d \setminus \{0\}}$.}
\For{$j = 0, \dots, B$ }
{
\begin{align*}
Y_{i+1}^{j}(\theta) =&  Y_{i}^{j}(\theta) - f(t_i,X_{i}^{j}(\theta),Y_{i}^{j}(\theta))\Delta t_{i}  +  \Zc^{\theta_{1}}(t_i,X_{i}^{j}(\theta)) \Delta W_i^j \\ 
      & +  \Wc^{\theta_{2}}(t_i, X_i^j(\theta), \sum_{l = 1}^{\Delta N^{j}_{i}} \tilde{\beta}_i^j(\Delta J^{i,j}_l))   - \frac{1}{A} \sum_{k=1}^A \left[  \Wc^{\theta_2}(t_i, X_i^j(\theta), \sum_{l = 1}^{\Delta \bar N^{k}_{i}} \tilde{\beta}_i^j(\Delta \bar J^{i,k}_l)) \right],\\
\end{align*}
}
}
$\phi(\theta) =  \frac{1}{B} \sum_{j = 1}^{B} \big |Y_{M}^{j}(\theta) - g(X_{M}^{j}(\theta)) \big|^{2}.$

$\theta = \theta - r_{m} \nabla  \phi(\theta)$}
 \caption{{\bf Global solver} \label{alg:globalDirac}}
\end{algorithm}
\clearpage
\textbf{2. SumLocal solver}. The second algorithm we develop  extends the one introduced in \cite{hure2020deep}. The setting with jumps is more involved and we propose two variants of this algorithm to deal with the jumps part of the backward component.
The $Y$ component is approximated by a neural network $\Uc^{\theta}$ and 
the two variants of the algorithm can be written as follows: 
\begin{itemize}
\item[(i)] We can directly use the system \eqref{eqn:discrete1} giving the \textit{SumLocal1} solver:
\begin{align*}
\Uc^\theta(t_{i+1}, X_{i+1}^{\pi}) \approx \quad & \Uc^\theta(t_{i}, X_i^{\pi}) -f(t_i,  X_i^{\pi}, \Uc^\theta(t_{i}, X_i^{\pi})) \Delta t_i + \Zc^\theta(t_i, X_i^{\pi}) \Delta W_i  +\\&
  \Uc^\theta(t_{i}, X_i^{\pi} +  \sum_{p=1}^{\Delta N_{i}} \beta(t_i, X_i^{\pi}, \Delta J_p^{i})) -
  \mathbb{E} \left[  \Uc^\theta(t_{i}, X_i^{\pi} +  \sum_{p=1}^{\Delta N_{i}} \beta(t_i, X_i^{\pi}, \Delta J_p^{i})) \Big |\mathcal{F}_{t_i} \right].
\end{align*}
    \item[(ii)] Or, as in the \textit{Global solver}, we can use a network  $\Wc^{\theta}$ for the jump part, which gives the \textit{SumLocal2} solver :
    \begin{align*}
\Uc^\theta(t_{i+1}, X_{i+1}^{\pi}) \approx \quad&  \Uc^\theta(t_{i}, X_{i}^{\pi})-f(t_i,  X_i^{\pi}, \Uc^\theta(t_{i}, X_i^{\pi})) \Delta t_i + \Zc^\theta(t_i, X_i^{\pi}) \Delta W_i  +\\&
   \Wc^\theta(t_i, X_i^{\pi} , \sum_{l = 1}^{\Delta N_{i}} \tilde{\beta}_i(\Delta J^{i}_l))    - 
  \mathbb{E} \left[
   \Wc^\theta(t_i, X_i^{\pi},  \sum_{l = 1}^{\Delta N_{i}}  \tilde{\beta}_i(\Delta J^{i}_l)) \Big |\mathcal{F}_{t_i} \right].
\end{align*}

\end{itemize}
Let $ \theta = (\theta_0, \theta_1, \theta_2)$ where $\theta_0 \in \R^{N^L_{d+1,m,kd}}$ are the parameters of the network function $\Zc^{\theta_0}$, $\theta_1 \in \R^{N^L_{1+2d,m,k}}$ are the parameters of the network function  $\Wc^{\theta_1}$, and  $\theta_2 \in \R^{N^L_{d+1,m,k}}$ are the parameters of the network function  $\Uc^{\theta_2}$.
We  detail the \textit{SumLocal2 solver} in  Algorithm \ref{alg:globalSumLocErrorDirac}.\\

\textbf{3. SumMultiStep solver.} The third algorithm represents a multistep version of the previous one, and extends the solver proposed in \cite{germain2020deep} to the jumps setting. It also has two versions, both based on the representation $\eqref{eqn:discrete1}$, for \textit{SumMultiStep1} we approximate the jumps part in the backward SDE as in $(i)$ above and for \textit{SumMultiStep2} we approximate the jumps part in the backward SDE as in $(ii)$ above.
Let $ \theta = (\theta_0, \theta_1, \theta_2)$ where $\theta_0 \in \R^{N^L_{d+1,m,kd}}$ are the parameters of the network function $\Zc^{\theta_0}$, $\theta_1 \in \R^{N^L_{1+2d,m,k}}$ are the parameters of the network function  $\Wc^{\theta_1}$, and  $\theta_2 \in \R^{N^L_{d+1,m,k}}$ are the parameters of the network function  $\Uc^{\theta_2}$.\
The \textit{SumMultiStep2  solver} is described in detail in Algorithm \ref{alg:globalDiracMultiStep}.\\

\begin{algorithm}[H]
\For{$ m = 0, \dots, K$}{
Set $\forall j \in [|1,B|]$ $X_{0}^{j}(\theta) = x_{0}$ \;
\For{$i= 0, \dots, M-1$}
{\For{$j = 1, \dots, B$}{ 
Sample $\Delta W_i^j$ from a Gaussian vector, sample $\Delta N^{j}_{i}$ from a Poisson distribution $\mathcal{P}\left (\lambda \Delta t_{i} \right)$ and sample each element of the jumps sequence $(\Delta J_l^{i,j})_{l = 1, \cdots, \Delta N^{j}_{i}}$ from the distribution  $\frac{\nu(de)}{\lambda} \mathds{1}_{\mathbb{R}^d \setminus \{0\}}$. 
\begin{align*}
      X_{i+1}^{j}(\theta) = & X_{i}^{j}(\theta) + \bar{b}(t_i,X_i^{j}(\theta),\Uc^{\theta_2}(t_i,X_{i}^{j}(\theta))) \Delta t_{i}  + \sigma (t_i,X_i^j(\theta))  \Delta W_i^j\\
      &+ \sum_{l = 1}^{\Delta N^{j}_{i}} \beta (t_i, X_i^j(\theta), \Delta J_l^{i,j}). 
\end{align*}}
\For{$k = 0, \dots, A$ }{Sample $\Delta \bar  N^{k}_{i}$ from a Poisson distribution $\mathcal{P}\left (\lambda \Delta t_{i} \right)$ and sample each element of the jumps sequence $(\Delta \bar J_l^{i,k})_{l = 1, \cdots, \Delta \bar  N^{k}_{i}}$ from the distribution  $\frac{\nu(de)}{\lambda} \mathds{1}_{\mathbb{R}^d \setminus \{0\}}$.}
}
{\small
\begin{align*}
    & \bullet \phi_{\text{local}}(\theta) = \sum_{i = 0}^{M-2} \left( \frac{1}{B} \sum_{j = 1}^{B}  \Big| \Uc^{\theta_2}(t_{i+1},X_{i+1}^{j}(\theta)) - \Uc^{\theta_2}(t_i,X_{i}^{j}(\theta)) +  f(t_i,X_{i}^{j}(\theta),\Uc^{\theta_2}(t_i,X_{i}^{j}(\theta))) \Delta t_{i} \right.\\& \left.  -  \Zc^{\theta_0}(t_i,X_{i}^{j}(\theta)) \Delta W_i^j  -   \Wc^{\theta_{1}}(t_i, X_i^j(\theta), \sum_{l = 1}^{\Delta N^{j}_{i}} \tilde{\beta}_i^j(\Delta J^{i,j}_l))  + \frac{1}{A} \sum_{k=1}^A \left[  \Wc^{\theta_2}(t_i, X_i^j(\theta), \sum_{l = 1}^{\Delta \bar N^{k}_{i}} \tilde{\beta}_i^j(\Delta \bar J^{i,k}_l)) \right]  \Big|^2 \right) \\
 & \bullet \phi_{\text{final}}(\theta) = \frac{1}{B} \sum_{j = 1}^{B} \bigg | g(X_{M}^{j}(\theta)) - \Uc^{\theta_2}(t_{M-1},X_{M-1}^{j}(\theta)) +\\
 &  f(t_{M-1},X_{M-1}^{j}(\theta),\Uc^{\theta_2}(t_{M-1},X_{M-1}^{j}(\theta))) \Delta t_{M-1}  -  \Zc^{\theta_0}(t_{M-1},X_{M-1}^{j}(\theta)) \Delta W_{M-1}  \\ & -     \Wc^{\theta_{1}}(t_{M-1}, X_{M-1}^{j}(\theta), \sum_{l = 1}^{\Delta N^{j}_{M-1}} \tilde{\beta}_i(\Delta J^{M-1,j}_l))  + \frac{1}{A} \sum_{k=1}^A \left[  \Wc^{\theta_2}(t_{M-1}, X_{M-1}^j(\theta), \sum_{l = 1}^{\Delta \bar  N^{k}_{M-1}} \tilde{\beta}_{M-1}^j(\Delta \bar J^{M-1,k}_l)) \right]  \bigg|^{2}\\
\end{align*}}
$\phi(\theta) = \phi_{\text{local}}(\theta) + \phi_{\text{final}}(\theta)$\\
$\theta = \theta - r_{m} \nabla  \phi(\theta)$}
      \caption{ \textbf{SumLocal  solver} (SumLocal2 variant). \label{alg:globalSumLocErrorDirac}}
\end{algorithm}

\begin{algorithm}[H]
\For{$ m = 0, \dots, K$}{
Set $\forall j \in [|1,B|]$ $X_{0}^{j}(\theta) = x_{0}$ \;
\For{$i= 0, \dots, M-1$}
{
\For{$j = 1, \dots, B$}{ 
\begin{flalign*}
 \psi_{i}^{j}(\theta)= \Uc^{\theta_2}(t_{i},X_{i}^{j}(\theta)) 
\end{flalign*}
Sample $\Delta W_i^j$ from a Gaussian vector, sample $\Delta N^{j}_{i}$ from a Poisson distribution $\mathcal{P}\left (\lambda \Delta t_{i} \right)$ and sample each element of the jumps sequence $(\Delta J_l^{i,j})_{l = 1, \cdots, \Delta N^{j}_{i}}$ from the distribution  $\frac{\nu(de)}{\lambda} \mathds{1}_{\mathbb{R}^d \setminus \{0\}}$. 
}
\For{$s = 0, \dots, A$ }{Sample $\Delta \bar N^{s}_{i}$ from a Poisson distribution $\mathcal{P}\left (\lambda \Delta t_{i} \right)$ and sample each element of the jumps sequence $(\Delta \bar J_l^{i,s})_{l = 1, \cdots, \Delta \bar N ^{s}_{i}}$ from the distribution  $\frac{\nu(de)}{\lambda} \mathds{1}_{\mathbb{R}^d \setminus \{0\}}$.}
\For{$k = 0, \dots, i$}{
\For{$j = 1, \dots, B$}{ 
\begin{align*}
 \psi_{k}^{j}(\theta)& = \psi_{k}^{j}(\theta) - f(t_i,X_{i}^{j}(\theta),\Uc^{\theta_2}(t_i,X_{i}^{j}(\theta)))\Delta t_{i}  + \Zc^{\theta_0}(t_i,X_{i}^{j}(\theta)) \Delta W_i^j \\&
      +\Wc^{\theta_{1}}(t_i, X_i^{j}(\theta), \sum_{l = 1}^{\Delta N_{i}^{j}} \tilde{\beta}_i(\Delta J^{i,j}_l))  - \frac{1}{A} \sum_{s=1}^A \left[  \Wc^{\theta_2}(t_i, X_i^j(\theta), \sum_{l = 1}^{\Delta \bar  N^{s}_{i}} \tilde{\beta}_i^j(\Delta \bar J^{i,s}_l)) \right].
\end{align*}
}
}
\For{$j = 1, \dots, B$} { 
\begin{align*}
      X_{i+1}^{j}(\theta) &=  X_{i}^{j}(\theta) + \bar{b}(t_i,X_i^{j}(\theta),\Uc^{\theta_2}(t_i,X_{i}^{j}(\theta))) \Delta t_{i}  + \sigma (t_i,X_i^j(\theta))  \Delta W_i^j\\
      &+ \sum_{l = 1}^{\Delta N^{j}_{i}} \beta (t_i, X_i^j(\theta), \Delta J_l^{i,j}).   
\end{align*}
}}
$\phi(\theta) = \sum_{i = 0}^{M-1} \left( \frac{1}{B} \sum_{j = 1}^{B} \big | \psi_{i}^{j}(\theta) - g(X_{M}^{j}(\theta)) \big |^{2} \right)$ \\   
$\theta = \theta - r_{m} \nabla  \phi(\theta)$}
\caption{The \textbf{SumMultiStep} solver (SumMultiStep2 variant). \label{alg:globalDiracMultiStep}}
\end{algorithm}
\clearpage
\vspace{2mm}
\subsubsection{Second class  of deep-learning algorithms.}
In this second part, we introduce the deep-learning algorithms based on the regression methods.

\noindent The following algorithms exploit the fact the driver does not depend on $Z$ and $U$, thus 
use a single network $\Uc^{\theta}$ to approximate the $Y$ component of the solution.
By conditionning the backward component in \eqref{eqn:discrete1}, we obtain \begin{align*}
    \Uc^{\theta}(t_{i}, X_{i}^{\pi})  \approx \E \left [ \Uc^{\theta}(t_{i+1}, X_{i+1}^{\pi}) + f(t_i,X_{i}^{\pi} ,\Uc^{\theta}(t_{i}, X_{i}^{\pi}))\Delta t_{i} |\mathcal{F}_{t_i}  \right].
\end{align*}
\textbf{1. SumLocalReg solver.} The first algorithm based on the regression methods is the \textit{SumLocalReg  solver}, which is a neural network version of the algorithms developed in \cite{gobet2005regression}, \cite{lemor2006rate}. It  is described in detail in Algorithm \ref{alg:globalSumLocErrorReg}.\\
\begin{algorithm}[H]
\For{$ m = 0, \dots, K$}{
Set $\forall j \in [|1,B|]$ $X_{0}^{j}(\theta) = x_{0}$ \;
\For{$i= 0, \dots, M-1$}
{\For{$j = 1, \dots, B$}
{
Sample $\Delta W_i^j$ from a Gaussian vector, sample $\Delta N^{j}_{i}$ from a Poisson distribution $\mathcal{P}\left (\lambda \Delta t_{i} \right)$ and sample each element of the jumps sequence $(\Delta J_l^{i,j})_{l = 1, \cdots, \Delta N^{j}_{i}}$ from the distribution  $\frac{\nu(de)}{\lambda} \mathds{1}_{\mathbb{R}^d \setminus \{0\}}$. 
\begin{align*}
      X_{i+1}^{j}(\theta) = & X_{i}^{j}(\theta) + \bar{b}(t_i,X_i^{j}(\theta),\Uc^{\theta}(t_{i}, X_{i}^{j}(\theta))) \Delta t_{i}  + \sigma (t_i,X_i^j(\theta))  \Delta W_i^j\\
      &+ \sum_{l = 1}^{\Delta N^{j}_{i}} \beta (t_i, X_i^j(\theta), \Delta J_l^{i,j}). 
\end{align*}}}
\begin{align*}
    &\phi_{\text{local}}(\theta) = \sum_{i = 0}^{M-2} \left( \frac{1}{B} \sum_{j = 1}^{B}  \Big| \Uc^{\theta}(t_{i+1}, X_{i+1}^{j}(\theta)) - \Uc^{\theta}(t_{i}, X_{i}^{j}(\theta))  +  f(t_i,X_{i}^{j}(\theta),\Uc^{\theta}(t_{i}, X_{i}^{j}(\theta))) \Delta t_{i}  \Big|^2 \right) \\
    &\phi_{\text{final}}(\theta) = \frac{1}{B} \sum_{j = 1}^{B}  \Big| g(X_{M}^{j}(\theta)) -  \Uc^{\theta}(t_{M-1}, X_{M-1}^{j}(\theta)) + 
    f(t_{M-1},X_{M-1}^{j}(\theta),\Uc^{\theta}(t_{M-1}, X_{M-1}^{j}(\theta)))\Delta t_{M-1}) \Big|^{2} \\
    &\phi(\theta) = \phi_{\text{local}}(\theta) + \phi_{\text{final}}(\theta)
\end{align*}
$\theta = \theta - r_{m} \nabla  \phi(\theta)$}
      \caption{ The \textbf{SumLocalReg solver}.\label{alg:globalSumLocErrorReg}}
\end{algorithm} 
\textbf{2. SumMultiStepReg solver.} The second  algorithm  is the \textit{SumMultiStepReg solver}, which is a multistep version of the previous one, in the same spirit as in \cite{bender2007forward}. It is described in detail in Algorithm \ref{alg:globalDiracMultiStepReg}.

\begin{algorithm}[H]
\For{$ m = 0, \dots, K$}{
Set $\forall j \in [|1,B|]$ $X_{0}^{j}(\theta) = x_{0}$\;
\For{$i= 0, \dots, M-1$}
{
\For{$j = 1, \dots, B$}{ 
\begin{align*}
 \psi_{i}^{j}(\theta) = \Uc^{\theta}(t_{i}, X_{i}^{j}(\theta))   
\end{align*}

Sample $\Delta W_i^j$ from a Gaussian vector, sample $\Delta N^{j}_{i}$ from a Poisson distribution $\mathcal{P}\left (\lambda \Delta t_{i} \right)$ and sample each element of the jumps sequence $(\Delta J_l^{i,j})_{l = 1, \cdots, \Delta N^{j}_{i}}$ from the distribution  $\frac{\nu(de)}{\lambda} \mathds{1}_{\mathbb{R}^d \setminus \{0\}}$. 
}
\For{$k = 0, \dots, i$}{
\For{$j = 1, \dots, B$}{ 
$
 \psi_{k}^{j}(\theta)= \psi_{k}^{j}(\theta) - f(t_i,X_{i}^{j}(\theta),\Uc^{\theta}(t_{i}, X_{i}^{j}(\theta)))\Delta t_{i}
$
}}
\For{$j = 1, \dots, B$}{ 
\begin{align*}
      X_{i+1}^{j}(\theta) &=  X_{i}^{j}(\theta) + \bar{b}(t_i,X_i^{j}(\theta),\Uc^{\theta}(t_{i}, X_{i}^{j}(\theta))) \Delta t_{i}  + \sigma  (t_i,X_i^j(\theta)) \Delta W_i^j \\&+ \sum_{l = 1}^{\Delta N^{j}_{i}} \beta (t_i, X_i^j(\theta), \Delta J_l^{i,j}).  
\end{align*}
}}
$\phi(\theta) = \sum_{i = 0}^{M-1} \left( \frac{1}{B} \sum_{j = 1}^{B} \big | \psi_{i}^{j}(\theta) - g(X_{M}^{j}(\theta)) \big |^{2} \right)$ \\   
$\theta = \theta - r_{m} \nabla  \phi(\theta)$}
\caption{The \textbf{SumMultiStepReg} solver  \label{alg:globalDiracMultiStepReg}}
\end{algorithm}

\begin{remark}
As proposed in \cite{Gnoatto22} in the case of decoupled FBSDEs, another method to estimate the compensator is to consider an additional neural network function $\mathcal{C}^{\theta_3}$ approximating the compensator $t,x \to \int_{\mathbb{R}^d \setminus \{0\}} \left(u(t, x + e) - u(t, x)\right)\nu(de)$ by adding an additional penalty term to the original loss function. Hence, the conditional expectation $\mathbb{E} \left[
   \Wc^{\theta_2}(t_i, X_i,  \sum_{l = 1}^{\Delta N_{i}}  \tilde{\beta}_i(\Delta J^{i}_l)) \big \lvert \mathcal{F}_{t_i} \right]$ is estimated by $\mathcal{C}^{\theta_3}(t_i, X_i) \Delta t_i$ at each time step by optimizing the following penalty function: 
\begin{equation} \label{eq:condApprox}
    \sum_{i = 0}^{M -1} \left[ \left \lvert \mathcal{W}^{\theta_2}(t_i,X_i(\theta), \sum_{l = 1}^{\Delta N_{i}} \tilde{\beta}_i(\Delta J^{i}_l)) - \mathcal{C}^{\theta_3}(t_i, X_i(\theta)) \Delta t_i \right\lvert^2  \right].
\end{equation}
We tested this approximation method in the coupled case for the Global method and both local methods, and the algorithms lacked accuracy due to the presence of an additional neural network and an additional term in the loss function. Our approach is based on the approximation of the conditional expectation as in \eqref{eq:condApprox} to directly estimate the backward component $Y$ in the \textit{SumLocalReg} and \textit{SumMultiStepReg} algorithms, which yields better results. 
\end{remark}

\begin{remark}
The above algorithms can be used to handle the general case of jumps with infinite activity,  after truncating the small jumps as in \cite{dumitrescu2021approximation, Gnoatto22}.
\end{remark}

\subsection{Numerical tests for option pricing}

In this subsection, we aim to assess the performance of the deep learning algorithms discussed in the preceding section in the context  of pricing European options in three different financial models: the Black-Scholes (BS) model (without jumps), the Merton (MJ) model (with jumps with finite activity), and the Variance Gamma (VG) model (with jumps with \textit{infinite} activity). Indeed, we can adapt the algorithms presented in the finite-activity setting to the pricing of European-options under the exponential Variance-Gamma model. 

We first set the hyper-parameters for the \textit{Global} solver and both variants of the \textit{SumLocal} and \textit{SumMultiStep} solvers where NbTraining corresponds to the number of gradient iterations of the Adam stochastic gradient descent algorithm \cite{kingma2014adam}.

\begin{table}[H]
\begin{center}
\begin{tabular}{| c | c |}
\hline
Parameter & value\\
\hline
 $m$ & 21\\
 $L$ & 2\\
 NbTraining & 12000\\
\hline
\end{tabular}
\quad
\begin{tabular}{| c | c |}
\hline
Parameter & value\\
\hline
 A & 5000\\
 B & 10\\
 $\sigma_{a}$ & tanh\\
\hline
\end{tabular}
\quad
\begin{tabular}{| c | c |}
\hline
\end{tabular}
\caption{Hyper-parameters for the first class of deep-learning algorithms}
\end{center}
\end{table}

Furthermore, the specific parameters of the regression methods (since the compensator is not computed) are:
\begin{table}[H]
\begin{center}
\begin{tabular}{| c | c |}
\hline
Parameter & value\\
\hline
 $m$ & 21\\
 $L$ & 2\\
 NbTraining & 12000\\
\hline
\end{tabular}
\quad
\begin{tabular}{| c | c |}
\hline
Parameter & value\\
\hline
 B & 10 000\\
 $\sigma_{a}$ & tanh\\
\hline
\end{tabular}
\quad
\begin{tabular}{| c | c |}
\hline
\end{tabular}
\end{center}
\caption{Hyper-parameters for the second class of deep-learning algorithms}
\end{table}
The algorithms are implemented in Python with \textit{Tensorflow} library. Each numerical experiment is conducted using GPU Tesla T4-PCIE-16GB. The code for the numerical experiments of the pricing and Mean Field Game (MFG) models can be accessed at the following URL: \url{https://github.com/ZakariaBensaid/DeepFBSDEJSolvers}.

 In the Black-Scholes and Merton models, we compare the results we get by implementing our deep learning algorithms with those obtained by using the well-known \textit{closed formula} of the solutions of the PDE, respectively PIDE. In the Variance Gamma model, we compare our results with those obtained by using the \textit{inverse Fourier} method computed with the Fast Fourier Transform algorithm.
 
 \subsubsection{The models}
 We present below the three models.\\
 
\noindent \textbf{The Black Scholes model} \textit{(No jumps)}
The BS model proposes to model the underlying asset $S_t$ under the risk neutral probability measure $\mathbb{Q}$ following a geometric Brownian motion with the following dynamics: $S_t = s\exp((r-\frac{\sigma^2}{2})t+\sigma W_t)$.  The problem of pricing an European call option in the BS model translates to the following FBSDE:
\begin{align}
    \begin{cases}
        dS_t = S_t(rdt+\sigma dW_t), \\
        -dY_{t} = - rY_{t}dt - Z_{t} dW_{t},\\
        S_0 = s, \quad Y_T = (S_T - K)^+ .
    \end{cases}
    t \in [0, T],
\end{align}
 where $K$ is the strike price. More precisely, the price of the European option at time $t$ is given by $Y_t$. Furthermore,  it is known that there exists a function $\bar{u}$ such that $Y_t=\bar{u}(t,S_t)$, where the function $\bar{u}$ solves a specific PDE.
 
To test the performance of the algorithms in a coupled setting, we propose below a forward-backward system for which the forward component has an additional term coupled to the backward component. More precisely, we consider the following system
\begin{align}\label{sysC}
    \begin{cases}
        dX_t = X_t(rdt+\sigma dW_t) + a |Y_t - \bar{u}(t, X_{t}) |dt, \\
        -dY_{t} = - rY_{t}dt - Z_{t} dW_{t},\\
        X_0 = S_0, \quad Y_T = (X_T - K)^+ .
    \end{cases}
    t \in [0, T],
\end{align} 
\noindent where $\bar{u}$ is the analytical solution of the PDE in the decoupled case. In the case of a small time maturity, Theorem \ref{thm:smallmat} guarantees that the system \eqref{sysC} admits an unique solution for which the backward component $Y$ provides the price of the european call option. This applies for all the models below.  \\

 \textit{Model parameters.} For the numerical implementation, we set $T = 1$ , $M = 50$ steps, the interest rate $r = 0.1$, the diffusion volatility $ \sigma = 0.3$, the strike price $K = 0.9$, the spot price $S_0 = 1$, and the coupling linearity coefficient when non-null $ a = 0.1$ .
\\

\noindent \textbf{Merton model} \textit{(Jumps with finite activity)}
Merton's \cite{merton_jump} approach proposes to ignore risk premia for jumps, this assumption leading to a specific choice for pricing and hedging. To describe the model, we assume that the underlying asset $S_t$ under the risk neutral probability measure $\mathbb{Q}$ follows the dynamics $S_t=S_0\exp((r-\sigma^2/2-m) t+\sigma W_t+\sum_{i=1}^{N_t}Y_i)$, where $N_t, Y_{i}$ are independent from $W_t$ and $N_t$ is a Poisson process with intensity $\lambda t$. The random variables $Y_i$ are i.i.d. and follow a $\mathcal{N}(\alpha, \xi^2)$ distribution. The constant $m$ is chosen  such that the process $\Tilde{S}_t=S_t e^{-rt}$ is a martingale under $\mathbb{Q}$ and is given by $m:=\lambda \mathbb{E}[e^{Y_i}-1]$. 

As above, under appropriate assumptions on the coefficients, we can express the problem of pricing an European call option in the Merton model in terms of the following coupled FBSDE: 
\begin{align}
    \begin{cases}
        dX_t = X_{t^-}(rdt+\sigma dW_t + \int_{\mathbb{R}^{\star}} (e^e - 1) \Tilde{\mathcal{J}}(dt,de)) + a |Y_t - \bar{u}(t, X_{t}) |dt, \\
        -dY_{t} = - rY_{t}dt - Z_{t} dW_{t} - \int_{\mathbb{R}^\star}U_{t}(e) \Tilde{\mathcal{J}}(dt,de),\\
        X_{0} = S_{0}; \quad Y_T = (X_T - K)^+ .\\
    \end{cases}
    t \in [0, T],
\end{align}
\noindent where $\bar{u}$ is the analytical solution of the partial integro differential equation associated to the  decoupled FBSDE and $\Tilde{\mathcal{J}}(dt,de)$ is the compensated jump measure associated with the compound Poisson process $\sum_{i=1}^{N_t} Y_{i}$ with intensity measure $\nu(de)$, where $\nu$ is given by : $$\nu(de) = \frac{\lambda}{\xi \sqrt{2 \pi}} \exp \left (- \frac{( e - \alpha )^2}{2 \xi^2} \right)de.$$

 \textit{Model parameters.} For this example, we set $T = 1$ , $M = 50$ steps, the interest rate $r = 0.1$, the diffusion volatility $ \sigma = 0.3$, the jumps intensity $\lambda = 3$, the parameters of the jumps distribution $\alpha = 0$ and $\xi = 0.2$, the strike price $K = 0.9$, the initial condition $X_0 = 1$, and the coupling linearity coefficient when non-null $ a = 0.1$ .
\\

    It can be observed that the algorithms introduced in the previous section in the case of finite activity jumps  can be used to compute the solution of coupled FBSDEs in some particular case of infinite activity jumps. In particular,  in the case when $\beta(t,x,e)=\gamma(x)\cdot e $, the jump process $J_t$ has finite variation and its jumps between two consecutive points on the grid can be simulated. In particular, this can be implemented for jump models based on the Brownian subordination, such as the Gamma or Variance-Gamma processes. We present below the results we obtained for the Variance-Gamma model.\\
    

\noindent \textbf{Variance Gamma model} \textit{(Jumps with infinite activity)}
The Variance-Gamma process is a L\'evy process with \textit{infinite activity jumps}, where the jumps part $J_t$ have a Variance-gamma law $VG(\bar{\sigma}, \kappa, \theta)$ (see e.g. \cite{Madan1990}). 
Its characteristic function is 
\begin{align*}
\mathbb{E}[e^{i u J_t}]= \left(1-iu \theta \kappa+ \frac{1}{2} \bar{\sigma}^2 \kappa u^2\right)^{-\frac{t}{\kappa}}.
\end{align*}

\noindent The variance-gamma process can be characterized as a time changed Brownian motion with drift, i.e.
$$J_{t} = \theta T_t + \bar{\sigma} W_{T_t},$$
where $W_t$ is a standard Brownian motion, $T_t \sim \Gamma(t, \kappa t)$ and $\theta$, $\kappa$, $\bar{\sigma}$ are given constants. 
The intensity measure of a Variance-Gamma process is given by
$$\nu(de) = \frac{\exp\left({\frac{\theta e}{\bar{\sigma}^2}}\right)}{\kappa|e|} \exp 
 \left( - \frac{\sqrt{\frac{2}{\kappa} + \frac{\theta^2}{\bar{\sigma}}^2}}{\bar{\sigma}} |e|\right) de$$


\noindent Under the risk neutral probability measure $\mathbb{Q}$, we assume that the underlying asset $S_t$ follows the dynamics 
\begin{align*}
S_t=S_0\exp((r+\omega)t+J_t),
\end{align*}
where 
$$\omega=\kappa^{-1}\log \left(1-\frac{1}{2}\bar{\sigma}^2 \kappa-\theta \kappa\right).$$

Similarly to the BS and MJ models, we consider below the following coupled FBSDE system which, under appropriate assumptions on the coefficients, provides the  price of an European call option in the VG model: 

\begin{align}
    \begin{cases}
            dX_t = X_{t^-}(rdt+ \int_{\mathbb{R}^{\star}} (e^e - 1) \Tilde{\mathcal{J}}(dt,de)) + a |Y_t - \bar{u}(t, X_{t}) |dt, \\
         -dY_{t} = - rY_{t} dt - \int_{\mathbb{R}^\star}U_{t}(e) \Tilde{\mathcal{J}}(dt,de), \\
         X_{0} = S_0; \quad Y_T = (X_T - K)^+ ,\\
    \end{cases}
    t \in [0, T],
\end{align}
where $\Tilde{\mathcal{J}}(dt,de)$ is the compensated jump measure associated with the variance Gamma process $J_t$, and $\bar{u}$ is the analytical solution of the PIDE in the decoupled case.

 \textit{Model parameters.} For this example, we set $T = 1$ , $M = 30$ steps, the interest rate $r = 0.1$, the time-scaled Brownian motion drift and volatility $\theta = - 0.1$ and $ \bar{\sigma} = 0.2$, the variance of the Gamma process $\kappa = 0.1$, the strike price $K = 0.9$, the initial condition $X_0 = 1$, and the coupling linearity coefficient when non-null $ a = 0.1$ .
 

\subsubsection{Results}
On Figure \ref{fig:BS}, \ref{fig:Merton}, \ref{fig:VG}, we plot the convergence of the different algorithms for the BS model, the Merton model and the Variance Gamma model, for $a=0$ (the decoupled case) and $a$ different from $0$ (the coupled case). Hence, we plot the evolution of $Y_0$ through 100 epochs for BS and 120 epochs for MJ and VG. Notice that 100 gradient descents are performed between 2 epochs for BS, MJ and VG. 
\begin{figure}[H]
    \centering
    \begin{subfigure}[b]{0.5\textwidth}
        \centering
        \subfloat[Coupled case ($ a = 0.1$)]{\includegraphics[width=\linewidth]{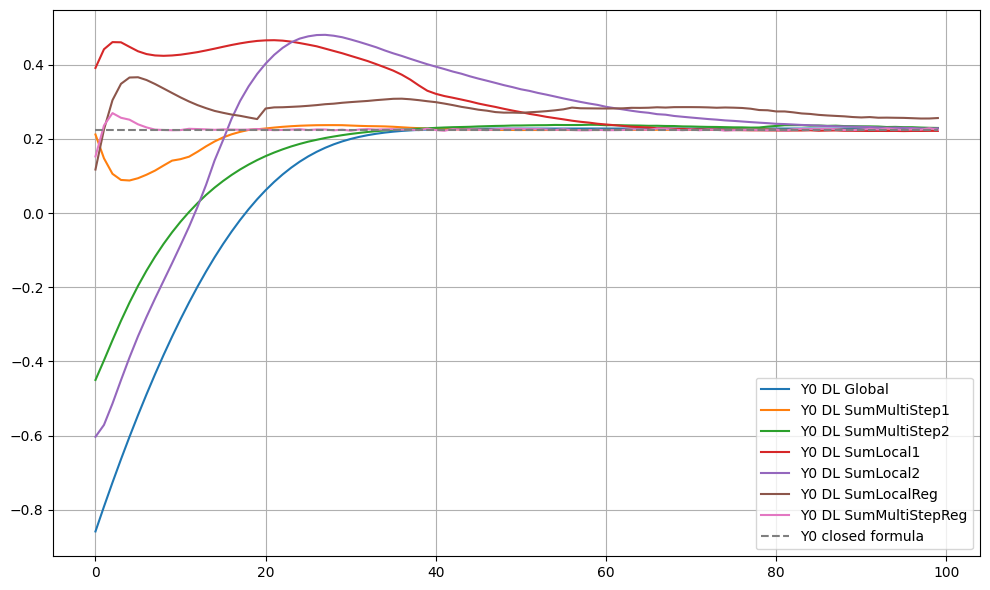}}
    \end{subfigure}%
    \begin{subfigure}[b]{0.5\textwidth}
        \centering
        \subfloat[Decoupled case ($ a = 0$)]{\includegraphics[width=\linewidth]{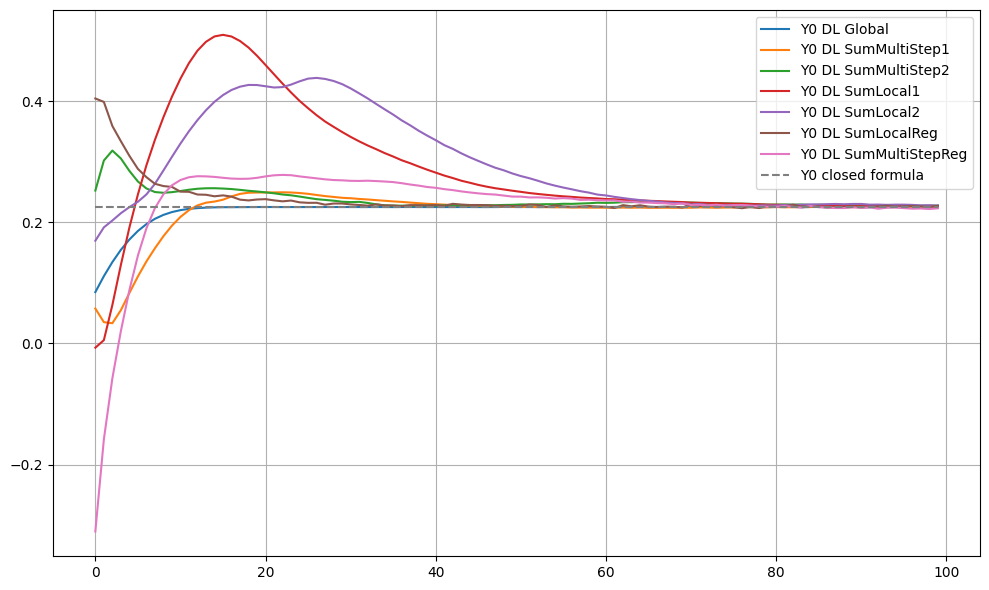}}
    \end{subfigure}\\[1ex]
\caption{Convergence of the 7 algorithms in the BS model \label{fig:BS}}
\end{figure}
Figure \ref{fig:BS} illustrates the convergence of the European call price $Y_0$ in the Black-Scholes model in both, the coupled and decoupled cases. This figure demonstrates that all methods converge smoothly in the decoupled case. In the coupled system, all methods also converge smoothly except for SumLocalReg, which stagnates between 0.255 and 0.256, instead of converging to the true value 0.225. 

\begin{figure}[H]
    \centering
    \begin{subfigure}[b]{0.5\textwidth}
        \centering
        \subfloat[Coupled case ($ a = 0.1 $)]{\includegraphics[width=\linewidth]{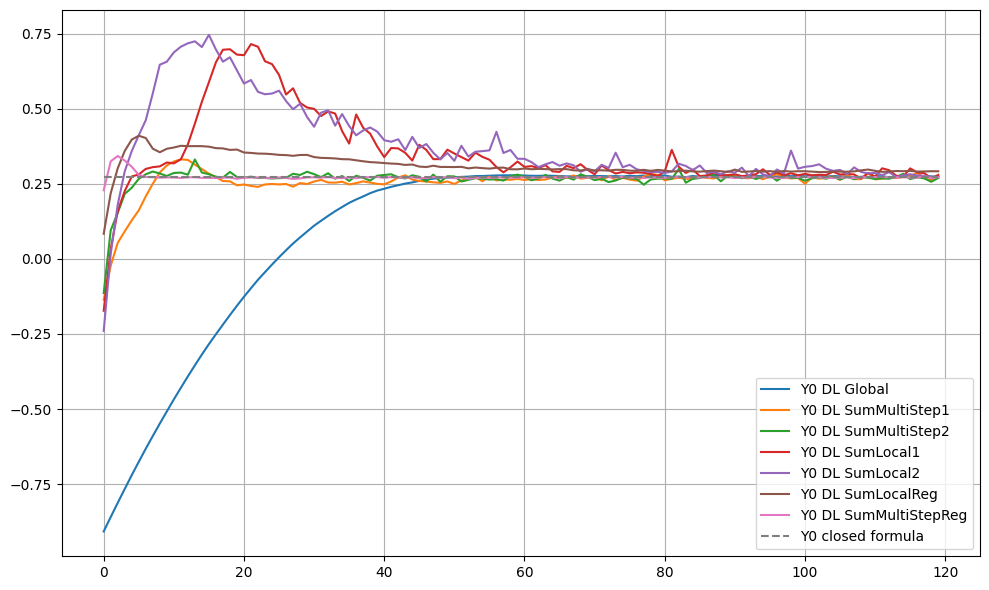}}
    \end{subfigure}%
    \begin{subfigure}[b]{0.5\textwidth}
        \centering
        \subfloat[Decoupled case ($ a = 0$)]{\includegraphics[width=\linewidth]{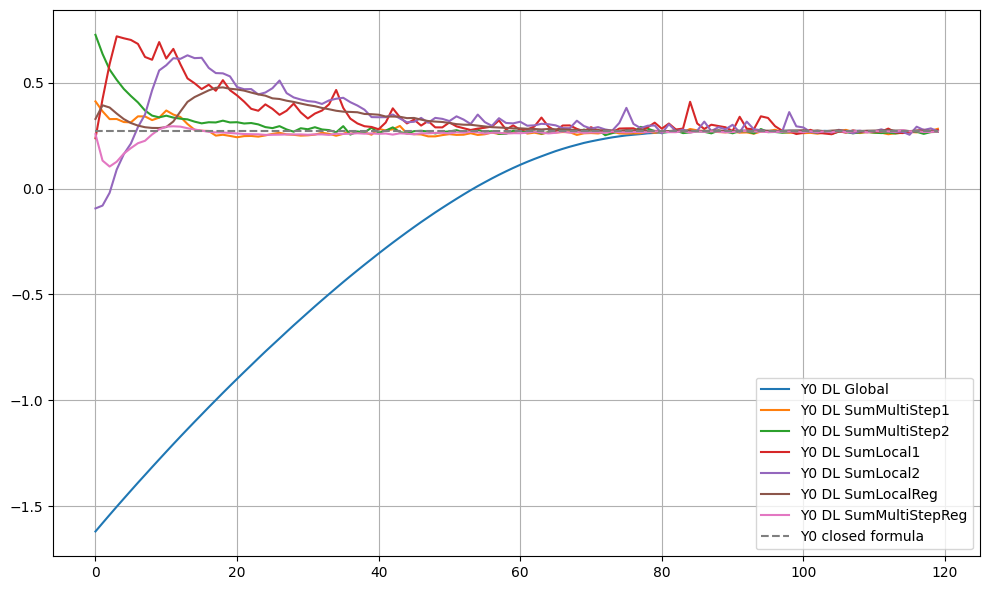}}
    \end{subfigure}\\[1ex]
\caption{Convergence of the 7 algorithms in the Merton model \label{fig:Merton}}
\end{figure}

Figure \ref{fig:Merton} illustrates the convergence of the value of the European call price $Y_0$ for the Merton model, in both the coupled and decoupled cases. This model allows us to test the performance of our algorithms in a setting that involves a jump diffusion model with finite activity. It can be observed that all the algorithms converge quickly, requiring only 80 epochs, except for SumLocalReg in the coupled case. Similar to its performance in the BS model, SumLocalReg is very unstable and far from the true value.

\begin{figure}[H] 
    \centering
    \begin{subfigure}[b]{0.5\textwidth}
        \centering
        \subfloat[Coupled case ($ a =0.1$)]{\includegraphics[width=\linewidth]{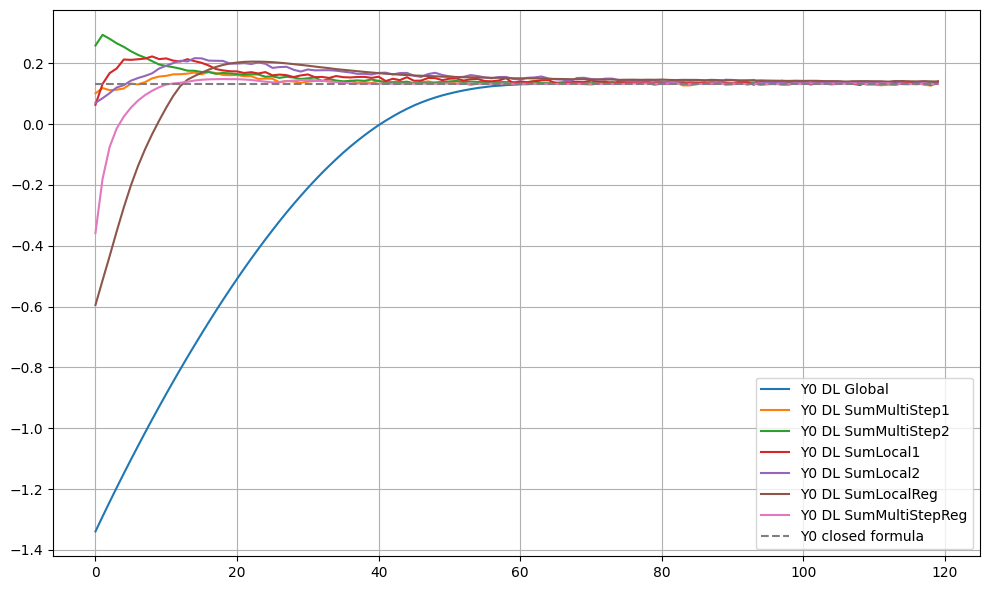}}
    \end{subfigure}%
    \begin{subfigure}[b]{0.5\textwidth}
        \centering
        \subfloat[Decoupled case ($ a = 0$)]{\includegraphics[width=\linewidth]{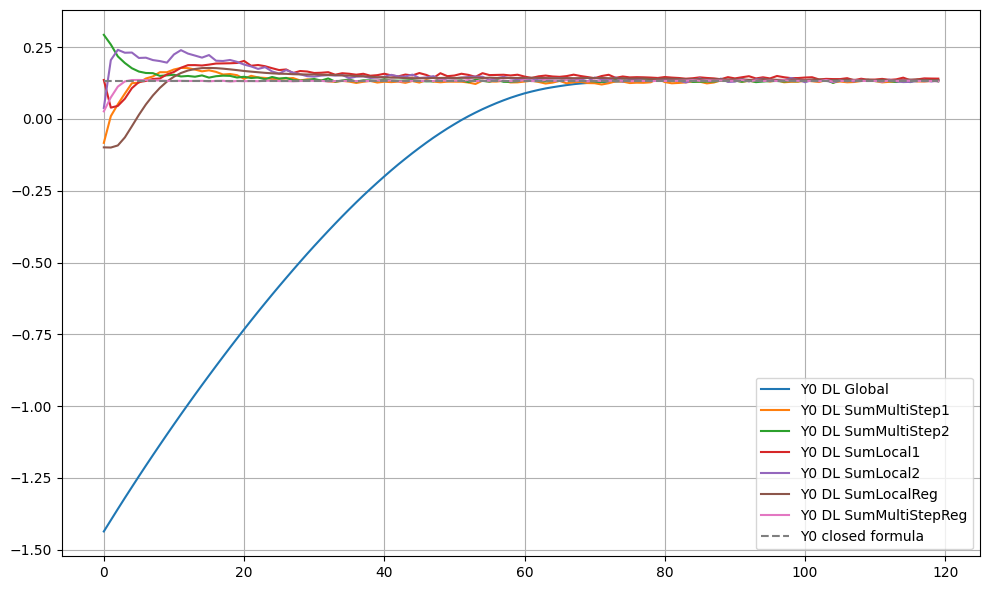}}
    \end{subfigure}\\[1ex]
\caption{ Convergence of the 7 algorithms in the Variance Gamma model  \label{fig:VG}} 
\end{figure}

Figure \ref{fig:VG} illustrates the convergence of the value of the European call price $Y_0$ for the Variance Gamma model, in both the coupled and decoupled cases. This model allows us to test the performance of our algorithms in a model with pure jumps with infinite activity. We observed that all the algorithms were consistent, stable and relatively quick except for the SumLocalReg algorithm.

To focus on the processing times intrinsic to the training process, we first present the computation times  and results for the Merton and Variance Gamma models with the parameter $a = 0$. This removes the additional computation time required for the analytical solution $u$ that is not part of the training process.
\begin{table}[H]
\centering

\begin{tabular}{@{}llllllll@{}}
\toprule
\multirow{2}{*}{Model} & \multicolumn{7}{c}{DL Methods} \\ \cmidrule(l){2-8} 
& Global & MultiStep1 & MultiStep2 & SumLocal1 & SumLocal2 & SumLocalReg & MultiStepReg \\ \midrule
MJ & 874s & 941s & 928s & 921s & 902s & 622s & 642s\\
VG & 634s & 687s & 692s & 673s & 670s & 666s & 702s\\ \bottomrule
\end{tabular}
\caption{Computation times in seconds for different DL methods after 12000 training steps}\label{tab:comp_times}
\end{table}
As shown in Table \ref{tab:comp_times}, the performance of the different deep learning methods for the Merton and Variance Gamma models was evaluated based on the computation time required for  12000 training steps. Overall, the results indicate that the MultiStepReg and SumLocalReg methods were the most time-efficient for the Merton model. Nonetheless, the discrepancies are not significant enough to base our choice on the computation time only. Thus, we present some accuracy and convergence results.\\

In Table \ref{tab:conva0}, we present the results obtained for $a=0$ after 12000 training steps.
\begin{table}[H]
\centering
\begin{tabular}{@{}llllllll@{}}
\toprule
\multirow{2}{*}{Model} & \multicolumn{7}{c}{DL Methods} \\ \cmidrule(l){2-8} 
& Global & MultiStep1 & MultiStep2 & SumLocal1 & SumLocal2 & SumLocalReg & MultiStepReg\\ \midrule
MJ &  \cellcolor{green!50} 0.271 & \cellcolor{green!50} 0.273 &  \cellcolor{red!50} 0.266 &\cellcolor{red!50} 0.276 & \cellcolor{green!50} 0.270 & \cellcolor{green!50} 0.272 & \cellcolor{green!50} 0.267\\
VG & \cellcolor{green!50} 0.133 & \cellcolor{green!50} 0.132 & \cellcolor{green!50} 0.137 & \cellcolor{red!50} 0.141 & \cellcolor{green!50} 0.130 & \cellcolor{green!50} 0.135 & \cellcolor{green!50} 0.132\\ \bottomrule
\end{tabular}
\caption{ $Y_0$ for $a=0$ and different DL methods.
The reference value for the Merton and Variance Gamma models are $0.271$ respectively $0.133$. The green color corresponds to an error less than $4.10^{-3}$ and the red color to an error larger than $4.10^{-3}$.}
\label{tab:conva0}
\end{table}
In Table \ref{tab:conva01}, we present the results obtained for $a=0.1$ after 12000 training steps.

\begin{table}[H]
\centering
\begin{tabular}{@{}llllllll@{}}
\toprule
\multirow{2}{*}{Model} & \multicolumn{7}{c}{DL Methods} \\ \cmidrule(l){2-8} 
& Global & MultiStep1 & MultiStep2 & SumLocal1 & SumLocal2 & SumLocalReg & MultiStepReg\\ \midrule
MJ &  \cellcolor{green!50} 0.273 & \cellcolor{green!50} 0.274 & \cellcolor{green!50} 0.269 & \cellcolor{red!50} 0.280 & \cellcolor{green!50} 0.273 & \cellcolor{red!50} 0.292 & \cellcolor{green!50} 0.272\\
VG & \cellcolor{green!50} 0.133 & \cellcolor{green!50} 0.136 & \cellcolor{green!50} 0.135 & \cellcolor{red!50} 0.141 & \cellcolor{green!50} 0.135 & \cellcolor{red!50} 0.140 & \cellcolor{green!50} 0.132\\ \bottomrule
\end{tabular}
\caption{ $Y_0$ for $a=0.1$ and different DL methods.
The reference value for the Merton and Variance Gamma models are the same as in Table \ref{tab:conva0}.}
\label{tab:conva01}
\end{table}
\subsubsection{Conclusion}
The results above show that neural network methods can solve coupled FBSDEs with jumps with finite activity (or a particular class of jumps with infinite activity as explained above) issued from pricing models. 
After various benchmarks, we observe that 
\begin{enumerate}
    \item The \textit{Global} method is consistent, stable and relatively robust compared to the other methods when it comes to the calibration of the hyper-parameters (especially the learning rate).
    \item The first variant \textit{SumLocal1} of the local methods is not very accurate in the decoupled and coupled cases, whereas the regression version \textit{SumLocalReg} is accurate in the decoupled case and faces more difficulties in the coupled case. On the other hand, \textit{SumLocal2} performs well in the decoupled and coupled cases with the fine-tuned hyper-parameters which have an important impact on the accuracy of the local algorithms.
    \item All the MultiStep variants \textit{MultiStep1}, \textit{MultiStep2} and the regression version \textit{MultiStepReg}, perform very well conditionally on finding the adequate hyperparameters that depend on the parameters of the models.
    \item Finally, \textit{MultiStepReg} provides the best computation speed and a good accuracy without the need to estimate the compensator using Monte Carlo or additional networks which will add extra biases.
\end{enumerate}



\section{Application to an MFG model with jumps for smart grids}\label{sec:MFG} 
In this section, we develop a generalized version of the model introduced in \cite{MFG_revised} (which is also related to the MFG models presented in \cite{alasseur2019extended, matoussi1}), which we solve numerically using the deep learning algorithms introduced in Section \ref{alg}. We consider an energy system with $n$ consumers who are linked by a Demand Side Management (DSM) contract, i.e. they agree to diminish, at random times, their aggregated power consumption by a predefined volume during a predefined duration. Their failure to deliver the service is penalised via the difference between the sum of the $n$ power consumptions and the contracted target. The jumps are supposed to come from a Cox process with a \textit{stochastic intensity process}, in contrast with \cite{MFG_revised} where the intensity is only assumed to be constant. From a modeling perspective, this generalization is important since it allows to capture the dependence of the intensity on e.g. the aggregated consumption, which implies that the jumps arrive with a higher rate when the demand is at its peak. This is when the demand is at its peak that the power system is more likely to benefit from a reduction of this power demand so that it reduces the cost of production. Furthermore, compared to \cite{MFG_revised} where the contracted target is a constant, we consider here the general case of a \textit{stochastic target process}. When $n \rightarrow \infty$, the problem can be written in terms of a Mean-Field Game model with interaction on the control.

\subsection{Extended MFG model for Demand Side Management with Cox process}
In this subsection, we first briefly describe the model in the setting of a finite population of players, and then present the MFG formulation. 

\paragraph{Model with $n$-consumers.} We assume that there are two types of consumers (\textit{active consumers} and \textit{standard consumers}). An \textit{active consumer} $i = 1, \dots, n$ enters a demand side management contract (DSM) and is characterized by two state variables $(Q^{i}, S^{\alpha^i})$. The variable $Q_{t}^{i}$ denotes the instantaneous electricity consumption of consumer $i$ at time $t$, representing the required electricity volume. \textit{Active consumers} can  deviate from their natural power demand by an amount $\alpha_{t}^{i}$, their total instantaneous consumption being $(Q_{t}^{i}+\alpha_{t}^{i})dt$. In case the instantaneous effort $\alpha_{t}^{i} > 0$ (resp. $< 0$), the consumer is anticipating (resp. postponing) specific activities which require energy, which implies that consumption is increased (resp. decreased) compared to the natural demand. The total deviation in consumption from natural power demand up to time $t$ is represented by $S_{t}^{\alpha^i}$. The second type of consumers is represented by the \textit{standard consumers}, for $i = n + 1, \dots, n+n'$, who do not optimize their consumption. They  are characterized by a single state variable $Q^{i,st}$ corresponding to the instantaneous consumption of consumer $i$ at time $t$. More precisely, the dynamics of the consumption (resp. total deviation in consumption) for consumer $i = 1,\ldots, n$ with DSM contract are given by 
\begin{align*}
dQ_t^i &= \mu(t, Q^i_t)dt + \sigma (t,Q^i_t)dW^i_t + \sigma^{0} (t,Q^i_t)dW^0_t  ,\quad Q_0^i = q_0^i ,\\
dS_t^{\alpha^i,i} &= \alpha^i_t dt, \quad S^i _0 = s_0^i,
\end{align*}
while those for any standard consumer $i=n+1,\ldots,n+n'$ are
\begin{align*}
dQ_t^{st,i} = \mu^{st}(t, Q^{st,i}_t)dt + \sigma^{st} (t,Q^{st,i}_t)dW^i_t+ \sigma^{st,0} (t,Q^{st,i}_t)dW^0_t  ,\quad Q_0^{st,i} = q_0^{st,i}.
\end{align*}
The processes $W_t^0, W_t^1,\ldots, W_t^{n+n^\prime}$ appearing above are assumed to be independent Brownian motions, and the functions $\mu,\mu^{st},\sigma,\sigma^{st},\sigma^0, \sigma^{st,0}$ are such that the above stochastic differential equations admit strong solutions.

The demand side management contract incorporates \textit{real-time pricing} and  an \textit{interruptible load} feature. First, \textit{real-time pricing} refers to the fact that consumers are charged at a  spot price $p$ which depends on the total consumption, having the role to incentivize the active consumers to reduce their consumption when it becomes too high. The associated  \textit{power cost} $c^i_t$ is a function of the total consumption of the standard consumers and those with a DSM contract. 

The interruptible load part of the contract is described as follows. At random times indicated by the Transmission System Operator (TSO) in case of supply-demand imbalance,  the total consumption of the active consumers $\sum_i (Q_t^{i}+\alpha_t^i)$ has to match a target process $\alpha_t^{tg}$, which could represent e.g. a fraction of the usual consumption. The target is maintained for a specific duration, and each agent is penalized if the total response differs from the required level of demand. 
The corresponding \textit{divergence cost} $d^i$ is expressed as a function of the total consumption of the \textit{active consumers}.

The DSM contract also includes: an \textit{inconvenience cost} $g$ (associated with the efforts made by consumers to control their consumption, which increases with the instantaneous effort $\alpha^i$ and the accumulated deviations $S^{\alpha^i}$), \textit{a demand charge cost} $l$ and  a \textit{terminal cost function $h$} (which penalizes any excess or shortfall of energy consumption during the period, as it indicates that the agent did not acquire the exact amount of energy needed during the specified time frame).

\paragraph{MFG formulation.} To present the model in the MFG setting, we first introduce the probabilistic setup.

 \textit{Probabilistic setup}. Let $(\Omega, \mathcal{F}, \mathbb{P})$ be a complete probabilistic space. We assume that all stochastic processes are defined on a finite time horizon $[0, T]$ with $T > 0$.

Suppose $W^{0}$ is a Brownian motion on this space on $[0,T]$ and $\mathbb{G}^{0} \triangleq (\mathcal{G}^{0}_{t})_{t \in [0, T]}$ is the filtration generated by $W^{0}$ augmented by the $\mathbb{P}$-null sets. Let $N^{0}$ be a \textit{doubly stochastic Poisson process} (or a \textit{Cox} Process) with a $\mathbb{G}^{0}$-predictable non-negative intensity $\lambda^{0} := (\lambda_{t}^0)_{t \in [0,T]}$.  In relation to $N^0$,  we denote by $\mathbb{D}^{0} \triangleq (\mathcal{D}^{0}_{t})_{t \in [0, T]}$ the filtration generated by the \textit{Cox} Process $N^{0}$ augmented with the $\mathbb{P}$-null sets. Let $\mathbb{F}^{0} = (\mathcal{F}^{0}_{t})_{t \in [0, T]}$ denote the filtration $\mathbb{F}^{0}=\mathbb{G}^{0} \vee \mathbb{D}^{0}$, i.e. the smallest filtration containing  $\mathbb{G}^{0}$ and $\mathbb{D}^{0}$. In our setting,  $\mathbb{F}^{0}$ plays the role of the \textit{common noise filtration}.

Assume that $\mathbb{E}[\int_0^T \lambda_s^0 ds]<\infty$ for all $t \in [0,T]$, from which it follows that the \textit{compensated Poisson process}   $\tilde{N}^{0}_{t} := {N}^{0}_{t} - \int_0^t \lambda_s^{0}ds$ is a $\mathbb{F}^0$-martingale.

 We also introduce the  Brownian motions $W$ and $\bar{W}$ (representing the \textit{idiosyncratic noises} of the active and standard consumers), which are independent of $W^{0}$ and $N^{0}$. Let $\mathbb{G} \triangleq (\mathcal{G}_{t})_{t \in [0, T]}$ denote the filtration generated by $W$ and $\bar{W}$, augmented  with the $\mathbb{P}$-null sets. We denote by $\mathbb{F}^{W}$ the (completed) filtration generated by $W$. Let $(s_0, q_0)$ be two random variables independent of all the above processes. Finally, let $\mathbb{F} = (\mathcal{F}_{t})_{t \in [0, T]}$ be the smallest filtration containing  $\mathbb{F}^{0}$, $\mathbb{G}$, and the  information generated by $(s_0, q_0)$.

\textit{Representative consumer with DSM contract and representative standard consumer}. The \textit{representative consumer} involved in the DSM contract is characterized by two state variables $(Q, S^\alpha)$, with $Q_{t}$ representing the instantaneous volume of electricity needed at time $t$ and $S_{t}^\alpha$ the accumulated deviation of electricity from the natural consumption, which is controlled by a control process $(\alpha_t)$. The dynamics of the state variables of the representative consumer with \textit{DSM} contract are given by
\begin{align}\label{sys}
\begin{cases}
dQ_{t} &= \mu(t, Q_{t})dt + \sigma(t, Q_{t}) dW_{t} + \sigma^{0}(t, Q_{t})dW^{0}_{t}, \quad Q_{0} = q_{0},\\
dS^\alpha_{t} &= \alpha_{t} dt, \quad S_{0} = s_{0},
\end{cases}
\end{align}
where $(\alpha_t)$ represents the instantaneous effort.

The \textit{representative standard consumer} is characterized by only one state variable $Q^{st}$ representing their usual consumption. The dynamics of the standard consumption is then given by
\begin{align}\label{sys1}
dQ^{st}_{t} = \mu^{st}(t, Q_{t}^{st})dt + \sigma^{st}(t, Q^{st}_{t}) d\bar{W}_{t}+\sigma^{st,0}(t, Q^{st}_{t}) d{W}^0_{t} , \quad Q^{st}_{0} = q^{st}_{0},
\end{align}
where all the above coefficients are continuous in $(t,x)$ and Lipschitz continuous with respect to $x$, uniformly in $t$.

\textit{Optimization problem consumer with DSM contract and MFG equilibrium.}  As explained in the description of the $n$-player model, the demand side management model considered in this paper includes \textit{dynamic pricing} and \textit{an interruptible load} feature. To describe the interruptible load part of the contract, let $(\alpha_t^{tg})_{t \in [0,T]}$ be a given $\mathbb{G}^{0}$-adapted process.  At random times  indicated by the operator system in charge of the production-consumption balance,  the aggregated power deviation of the consumption has to match  the stochastic contracted target process $\alpha_t^{tg}$  for a predefined time period $\theta>0$. The random times correspond to the jump times of the Cox process $(N_t^0)$. In case the target is not achieved during the period $\theta$, then the representative consumer is penalized. We introduce the process $R$ which measures the time since the last DSM jump occurred. Thus, the dynamics of $R$ are given by
$$ dR_{t} = dt - R_{t^{-}} dN^{0}_{t}, \quad R_{0} = 2\theta.$$
Fix $\xi = (\xi_{t})_{t \in [0,T]}$ a $\mathbb{F}^{0}$-adapted process which represents a \textit{predetermined} power deviation and $\alpha \in \mathcal{A}$, where $\mathcal{A}$ is the set of all real-valued $\mathbb{F}^{W} \vee \mathbb{G}^{0}$-progressively measurable processes $\alpha$ such that $\mathbb{E}[\int_{0}^{T} \alpha_{t}^{2} dt] < +\infty$ and $\mathbb{E}[ |\alpha_{\tau}| \mathds{1}_{\tau < \infty}] < +\infty$ for all $\mathbb{F}^{0}$-stopping times $\tau$ with values in $[0,T] \cup \{ +\infty \}$. This set is called the set of \textit{admissible} controls. The divergence cost is then defined as follows:
$$ d^{\alpha, \xi}_{t} = ( Q_{t} + \alpha_{t} - \alpha^{tg}_{t}) f \left (\mathbb{E}[Q_{t}|\mathcal{F}_t^0] + \xi_t - \alpha^{tg}_{t} \right)J_{t}^{\theta},$$
where $ J_{t}^{\theta} = \mathds{1}_{R_{t} \leq \theta }$ (i.e. $ J_{t}^{\theta}$ is equal to one during interruptible load contract activation and zero otherwise) and $f$ is a convex growing function such as $f(0) = 0$. 

The second component of the \textit{DSM contract} is represented by the power cost $c^{\alpha,\xi}_{t}$, which defines the \textit{dynamic pricing rule} and is defined as
\begin{align*}
c^{\alpha, \xi}_{t} &= (Q_{t} + \alpha_{t}) p \left( \pi \mathbb{E}[Q_{t}^{st}|\mathcal{F}_t^0] + (1-\pi) (\mathbb{E}[Q_{t} |\mathcal{F}_t^0] + \xi_t) \right),
\end{align*}
where $p$ represents the spot price functional of the power system at which the consumers are charged, and $\pi$ represents the proportion of standard consumers with respect to DSM consumers in the total population.

Finally, we introduce the \textit{inconvenience cost} $g(\alpha_{t},S^\alpha_{t},Q_{t})$ (with $g$ convex in $\alpha_{t}$ and $S^\alpha_{t}$), the cost $l$ representing the demand charge component of the retail tariff structure, and the \textit{terminal cost} $h(S^\alpha_{T})$. 

We can now formulate the MFG problem.  For a fixed process $\xi$, the active consumer is optimizing the following functional:
\begin{equation*}
    J(\alpha;\xi) =  \mathbb{E} \left [ \int_{0}^{T} \left \{ g(\alpha_{t}, S^\alpha_{t}, Q_{t}) + l(Q_{t} + \alpha_{t}) + c^{\alpha, \xi}_t+d^{\alpha, \xi}_t \right \}dt + h(S^\alpha_{T}) \right].
\end{equation*}

Therefore, the optimization problem of the representative consumer can be then written as follows 
\begin{align}\label{opt}
V^{MFG}(\xi) = \inf_{\alpha \in \mathcal{A}} J^{MFG}(\alpha; \xi)
\end{align}
\begin{definition}[Mean-field Nash Equilibrium]
The solution $\alpha^{\star}$ to problem $\eqref{opt}$ is called a mean field \textit{Nash equilibrium} if $\mathbb{E}[\alpha^{\star}_t|\mathcal{F}_t^0]=\xi_t$ a.s. for all $0 \leq t \leq T$. 
\end{definition}
\begin{remark}
    Notice that, in contrast to \cite{MFG_revised} where the target $\alpha^{tg}$ is only considered to be a constant, we consider here a $\mathbb{G}^{0}$-adapted target process $\alpha^{tg} = (\alpha^{tg}_{t})_{t \in [0,T]}$. We also assume that the jump times correspond to the ones of a Cox process (time non-homogeneous Poisson process), compared to \cite{MFG_revised} where it is supposed that they come from a Poisson process.
\end{remark}

\subsection{Characterization of the MFG equilibrium with Cox Process} \label{CharCox}
In this Section, we first provide a characterization of the MFG equilibrium in a general setting, and then focus on the linear-quadratic model. We  introduce the following sets, which will be used throughout the rest of the paper:
\begin{itemize}
    \item $\mathcal{S}^2$ is the set of $\mathbb{F}$-adapted càdlàg real-valued processes $Y$ such that $\mathbb{E}\left [\sup_{0 \leq t \leq T} |Y_t|^2 \right] < + \infty$.
    \item $\mathcal{H}^2$ is the set of $\mathbb{F}$-predictable real-valued  processes $q$ such that $\| q \|^{2} : = \mathbb{E}[\int_{0}^{T} |q_{t}|^{2} dt] < + \infty$.
    \item $\mathcal{H}_{\lambda^{0}}^2$ is the set of $\mathbb{F}$-predictable real-valued processes $\nu^{0}$ such that $\| \nu^{0} \|^{2}_{\lambda^{0}} : = \mathbb{E}[\int_{0}^{T} |\nu^{0}_{t}|^{2} \lambda^{0}_{t} dt] < + \infty$.
    \item $L^2(\mathcal{F}_{T})$ is the set of $\mathcal{F}_T$-measurable real-valued random variables $\xi$ such that $\mathbb{E}[\lvert \xi \vert^2] < +\infty$.
\end{itemize}

    In the sequel, given a $\mathcal{B}([0, T]) \otimes \mathcal{F}$-measurable process $X$ such that $E[|X_\tau| 1_{\tau < \infty}] < \infty$ for all $\mathbb{F}^0$-stopping times $\tau$ with values in $[0, T] \cup \{+\infty\}$, we will denote by $\widehat{X}$ the  optional projection of $X$ with respect to the filtration $\mathbb{F}^0$, i.e. $\widehat{X}$ is the unique (up to indistinguishability) $\mathbb{F}^0$-optional process such that $\widehat{X}_\tau 1_{\tau < \infty} = \mathbb{E} \left [X_\tau 1_{\tau < \infty}|\mathcal{F}^0_{\tau} \right]$ a.s. for all $\mathbb{F}^0$-stopping times $\tau$ with values in $[0, T] \cup \{+\infty\}$ (cf. Section 2 in \cite{bremaud_yor_1978}). 

\begin{assumption} \label{assum:MFG} We make the following assumptions:
\begin{itemize}
    \item $g$, $l$ and $h$ have at most quadratic growth and are strictly convex.
    \item $p$ and $f$ have at most linear growth.
    \item $g$, $p$, $f$, $l$ and $h$ are differentiable.
\end{itemize}
\end{assumption}
We now give the following characterization of a MFG equilibrium.
\begin{theorem}[Characterization of the mean-field game equilibrium]\label{existunique}
Let $\hat{\xi}$ be a given $\mathbb{F}^{0}$-adapted real valued process and $x_0 = (s_0, q_0, q^{st}_0)$ be a random vector independent of $\mathbb{F}^{0}$. Assume that $\alpha \rightarrow J^{MFG}(\alpha; \hat{\xi}) $ is strictly convex. If there exists a control $\alpha^{\star} \in \mathcal{A}$ which minimizes $\alpha \rightarrow J^{MFG}(\alpha; \hat{\xi}) $ and if $(S^{\alpha^{\star}}, Q, Q^{st})$ is the state process associated to the initial condition $x_0$, optimal control $\alpha^{\star}$ and the dynamics \eqref{sys}-\eqref{sys1}, then there exists a unique solution $(Y^{\star}, q^{0, \star}, q^{\star}, \nu^{0, \star}) \in \mathcal{S}^2 \times (\mathcal{H}^2)^2 \times \mathcal{H}_{\lambda^{0}}^2$ of the following BSDE with jumps:
\begin{align} \label{eqn:sol}
\begin{cases}
    -dY_{t}^{\star} = \partial_{\alpha}g(\alpha_t^{\star}, S_{t}^{\alpha^{\star}}, Q_{t})dt - q^{0, \star}_{t} dW_{t}^{0} - q^{st, \star}_{t} d \bar{W}_{t} - \nu^{0, \star}_{t} d\tilde{N}^{0}_{t}, \\
    Y_{T}^{\star}  = \partial_{x}h(S_{T}^{\alpha^{\star}}),\\
\end{cases}
\end{align}
satisfying the coupling condition
\begin{equation} \label{eqn:coupling}
    \partial_{\alpha}g(\alpha_t^{\star}, S_{t}^{\alpha^{\star}}, Q_{t}) + \partial_{\alpha} l(Q_{t} + \alpha^{\star}_{t}) + p(\pi \widehat{Q}^{st}_t +(1- \pi)(\widehat{Q}_{t} + \hat{\xi}_{t})) + Y_{t}^{\star} + J^{\theta}_{t}f(\widehat{Q}_{t} + \hat{\xi}_{t} - \alpha^{tg}_{t}) = 0
\end{equation}
\noindent Conversely, assume that there exists  $(\alpha^{\star}, S^{\alpha^{\star}}, Y^{\star}, q^{0, \star}, q^{\star}, \nu^{0, \star}) \in \mathcal{A} \times (\mathcal{S}^2)^2 \times (\mathcal{H}^2)^2 \times \mathcal{H}_{\lambda^{0}}^2$ satisfying the FBSDE $\eqref{eqn:sol}$ and the coupling condition $\eqref{eqn:coupling}$, then $\alpha^{\star}$ is the optimal control minimizing $\alpha \rightarrow J^{MFG}(\alpha; \hat{\xi}) $ and $S^{\alpha^{\star}}$ is the optimal trajectory.

If additionally $\widehat{\alpha}^{\star}_{t} = \hat{\xi}_{t}$ a.s for all $t \in [0, T]$, then $\alpha^{\star}$ is a Mean-field equilibrium.
\end{theorem}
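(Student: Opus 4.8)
The natural route is the Pontryagin stochastic maximum principle, exploiting the fact that the only controlled state is the accumulated deviation $S^\alpha$, whose dynamics $dS^\alpha_t=\alpha_t\,dt$ carry neither diffusion nor jump. For a frozen $\mathbb{F}^{0}$-adapted process $\hat\xi$ (so that the conditional-expectation terms $\widehat Q$, $\widehat Q^{st}$ and the target $\alpha^{tg}$ enter $J^{MFG}(\cdot;\hat\xi)$ merely as given coefficients), I would work with the Hamiltonian
$$
H(t,s,q,a,y)=y\,a+g(a,s,q)+l(q+a)+(q+a)\,p\big(\pi\widehat Q^{st}_t+(1-\pi)(\widehat Q_t+\hat\xi_t)\big)+(q+a-\alpha^{tg}_t)\,f(\widehat Q_t+\hat\xi_t-\alpha^{tg}_t)\,J^\theta_t,
$$
whose adjoint equation conjugate to $S^\alpha$ is exactly the BSDE with jumps \eqref{eqn:sol}, and whose stationarity condition $\partial_a H=0$ is precisely the coupling relation \eqref{eqn:coupling}.

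For the necessity direction I would first note that the driver of \eqref{eqn:sol} is a given process not involving $(Y^\star,q^{0,\star},q^{st,\star},\nu^{0,\star})$, so the adjoint equation is affine and its unique solution is obtained by conditioning,
$$
Y^\star_t=\E\Big[\partial_x h(S^{\alpha^\star}_T)+\int_t^T\partial_\alpha g(\alpha^\star_s,S^{\alpha^\star}_s,Q_s)\,ds\,\Big|\,\mathcal F_t\Big],
$$
with the integrands $(q^{0,\star},q^{st,\star},\nu^{0,\star})$ recovered from the martingale representation of the $\mathbb{F}$-martingale $t\mapsto\E[\,\cdot\,|\mathcal F_t]$ against $W^0$, $\bar W$ and the compensated Cox process $\tilde N^0$. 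The requisite integrability, $\partial_x h(S^{\alpha^\star}_T)\in L^2(\mathcal F_T)$ and $\int_0^T|\partial_\alpha g|^2\,dt<\infty$, follows from the quadratic-growth and differentiability hypotheses of Assumption \ref{assum:MFG} together with $S^{\alpha^\star},Q\in\mathcal S^2$ (the former because $\alpha^\star\in\mathcal A$, the latter by the Lipschitz SDE estimates), which yields existence and uniqueness in $\mathcal S^2\times(\mathcal H^2)^2\times\mathcal H^2_{\lambda^0}$. Optimality of $\alpha^\star$ then gives the coupling by a convex perturbation $\alpha^\star+\varepsilon(\beta-\alpha^\star)$: differentiating $\varepsilon\mapsto J^{MFG}(\alpha^\star+\varepsilon(\beta-\alpha^\star);\hat\xi)$ at $0$, applying Itô to $Y^\star_t(S^\beta_t-S^{\alpha^\star}_t)$ and using that the $dW^0,d\bar W,d\tilde N^0$ integrals are mean-zero, collapses the Gâteaux derivative to $\E\int_0^T\partial_aH\,(\beta_t-\alpha^\star_t)\,dt\ge 0$ for every admissible $\beta$; since $\mathcal A$ is a linear space this forces $\partial_aH=0$, i.e.\ \eqref{eqn:coupling}.

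The converse (sufficiency) direction is the verification step and is where I expect the real work. Given a tuple solving \eqref{eqn:sol}--\eqref{eqn:coupling}, I would show $J^{MFG}(\beta;\hat\xi)-J^{MFG}(\alpha^\star;\hat\xi)\ge 0$ for every $\beta\in\mathcal A$. Writing the difference of the two functionals and applying Itô to $Y^\star_t(S^{\beta}_t-S^{\alpha^\star}_t)$ on $[0,T]$, the terminal contribution produces $\partial_x h(S^{\alpha^\star}_T)(S^\beta_T-S^{\alpha^\star}_T)$ and the drift contribution reconstructs the Hamiltonian increment, while the stochastic integrals against $W^0,\bar W,\tilde N^0$ vanish in expectation because $\tilde N^0$ is an $\mathbb{F}^0$-martingale and the integrands lie in $\mathcal H^2$, $\mathcal H^2_{\lambda^0}$. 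Convexity of $g,l,h$ (and linearity in $a$ of the $p$- and $f$-terms at frozen $\hat\xi$) then yields $H(t,\cdot,\cdot,\beta_t,Y^\star_t)-H(t,\cdot,\cdot,\alpha^\star_t,Y^\star_t)\ge \partial_aH|_{\alpha^\star}(\beta_t-\alpha^\star_t)$, and \eqref{eqn:coupling} kills the first-order term, leaving a nonnegative remainder; strict convexity of $J^{MFG}(\cdot;\hat\xi)$ upgrades this to uniqueness of the minimiser. The main obstacle is organising this convexity bookkeeping so that the mean-field coupling terms frozen through $\hat\xi,\widehat Q,\widehat Q^{st}$ and the intermittent indicator $J^\theta_t$ are treated consistently, while controlling the Cox-driven jump martingale so that its compensator integrals cancel.

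Finally, imposing the consistency requirement $\widehat{\alpha}^\star_t=\hat\xi_t$ a.s.\ for all $t$ makes the frozen flow coincide with the realised $\mathbb{F}^0$-conditional mean of the optimal control, so that $\alpha^\star$ is optimal against its own conditional law; by the definition of a mean-field Nash equilibrium this is exactly the assertion that $\alpha^\star$ is an equilibrium, which closes the argument.
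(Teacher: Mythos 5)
Your proposal is correct and takes essentially the same route as the paper: the paper's own proof is just a two-line deferral --- well-posedness of the Cox-driven BSDE by arguments as in the cited work on BSDEs with Cox processes (carried out in the paper's appendix via conditioning and martingale representation, exactly as in your necessity step), followed by ``the same steps as Theorem 3.1'' of the cited MFG paper, i.e.\ the Pontryagin convex-perturbation argument for necessity and the convexity/verification argument for sufficiency that you spell out. In other words, your write-up is a fleshed-out version of precisely the omitted stochastic-maximum-principle proof, including the final consistency step $\widehat{\alpha}^{\star}=\hat{\xi}$ identifying the Nash equilibrium.
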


\begin{proof}
    Under  Assumption \ref{assum:MFG} and using similar arguments as in  \cite{jumpdef} to prove existence and uniqueness results for BSDEs driven by Cox processes, we conclude that the BSDE defined in the theorem is well-posed. Using this result, the proof follows the same steps as in Theorem 3.1 in \cite{MFG_revised} , and we therefore omit it.
\end{proof}
\paragraph{Semi-explicit representation of the MFG equilibrium in the linear quadratic case}
We shall provide here a semi-explicit characterization of the equilibrium in the linear-quadratic setting, which is ensured by the following assumption.

\begin{assumption}
Let  $(\chi_{t})_{t \in [0, T]}$ and $(\chi^{st}_{t})_{t \in [0, T]}$ be two continuous deterministic processes. We suppose that the following assumptions are satisfied: \\
1. $\mu(t,q) = \mu (\chi_{t} - q) $, $\mu^{st}(t,q) =  \mu^{st} (\chi^{st}_{t} - q)$, $\sigma(t,q) = \sigma $, $\sigma^{st}(t,q) = \sigma^{st}  $, $\sigma^{st, 0}(t,q) = \sigma^{st, 0}$, and $\sigma^{0}(t,q) = \sigma^{0}$, with $\mu, \mu^{st}, \sigma,\sigma^{0},\sigma^{st}>0$.\\
2. $g(a,s,q) = \frac{A}{2}a^{2} + \frac{C}{2} s^{2}$ with $A,C > 0$. \\
3. $l(x) = \frac{K}{2} x^{2}$ with $K \geq 0$.\\
4. $f(a) = f_{0} + f_{1}a$ with $f_{i} \in \mathbb{R}, i = 0,1$ and $f_{1} \geq 0$. \\
5. $p(q) = p_{0} + p_{1}q$ with $p_{0} \in \mathbb{R}$, and $p_{1} > 0$. \\
6. $h(s) = h_{0} + h_{1}s + \frac{h_{2}}{2}s^{2}$, with $h_{i} \in \mathbb{R}, i = 0,1,2$ and $h_{2} \geq 0$.
\end{assumption}

Following the approach used in \cite{MFG_revised} in the particular case of a Poisson process, in the linear-quadratic setting we look for solutions taking the  form:
$$ \widehat{Y}_{t} = \bar{\phi}_{t} S^{\widehat{\alpha}}_{t} + \bar{\psi}_{t} \text{ and } Y_{t} = \phi_{t} S^{\alpha}_{t} + \psi_{t},$$ 
with $(\bar{\phi}_{t}, 0,  \widehat{\xi}^0_t,\widehat{\xi}^{0,N}_t)$, $(\bar{\psi}_{t},\widehat{\eta}^0_t,0, \widehat{\eta}^{0,N}_t)$, $(\phi_{t},0,0,0)$ and $(\psi_{t}, \eta_t^{0}, \eta_t, \eta_t^{0,N})$ the unique solutions in $\mathcal{S}^2 \times (\mathcal{H}^2)^2 \times \mathcal{H}^2_{\lambda^0}$  of the following BSDEs driven by a Cox process:
\begin{align*}
    d \phi_{t} &= \left (-C + \frac{1}{A+K} \phi_{t}^{2} \right)dt, \quad \phi_{T} = h_{2},\\
    d \psi_{t} &= \frac{\phi_{t}}{A+K} \left [ K Q_{t} + p_{0} + \pi p_{1} \widehat{Q}^{st}_{t} + ((1 - \pi)p_{1} + K) (\widehat{Q}_{t} + \widehat{\alpha}_{t}) + \right.\\
    &\left. J_{t}^{\theta}(f_{0} + f_{1} (\widehat{Q}_{t} + \widehat{\alpha}_{t} - \alpha^{tg}_{t})) + \psi_{t}\right]dt
    + \eta^{0}_{t} dW_{t}^{0} + \eta_{t} dW_{t} + \eta^{0,N}_{t} d\tilde{N}_{t}^{0}, \quad \psi_{T} = h_{1},\\
    d \bar{\phi}_{t} &= \left (-C + \frac{1}{K^{\theta}_{t}} \bar{\phi}_{t}^{2} \right) + \widehat{\xi}_{t}^{0} dW_{t}^{0} + \widehat{\xi}_{t}^{0,N} d \tilde{N}_{t}^{0}, \quad \bar{\phi}_{T} = h_{2},\\
    d \bar{\psi}_{t} &= \frac{\bar{\phi}_{t}}{K^{\theta}_{t}} \left [ p_{0} + \pi p_{1} \widehat{Q}^{st}_{t} + ((1 - \pi)p_{1} + K) \widehat{Q}+ J_{t}^{\theta}(f_{0} + f_{1} (\widehat{Q}_{t} - \alpha^{tg}_{t})) + \bar{\psi}_{t}\right]dt \\
    &+ \widehat{\eta}^{0}_{t} dW_{t}^{0} + \widehat{\eta}^{0,N}_{t} d\tilde{N}_{t}^{0}, \quad \bar{\psi}_{T} = h_{1},
\end{align*}
where $K^{\theta}_{t} = A + K + (1-\pi) p_{1} + f_{1} J_{t}^{\theta} $.

The wellposedness of the above BSDEs follows by an adaptation of the theorems provided in \cite{jumpdef}. 
By using the ansatz and replacing it in the projected coupling condition, we obtain:
\begin{align}\label{projequil}
\begin{small}
\widehat{\alpha}_{t} = - \frac{1}{K^{\theta}_{t}} \left ( p_{0} + \pi p_{1} \widehat{Q}^{st} + ((1-\pi)p_{1} + K) \widehat{Q}_{t} + \widehat{Y}_{t} + (f_{0} + f_{1} (\widehat{Q}_{t} - \alpha^{tg}_{t}))J_{t}^{\theta} \right).
\end{small}
\end{align}
Finally, by using the expression of $\widehat{\alpha}$ and again the ansatz and the coupling condition, we finally obtain that the \textit{MFG equilibrium} $\alpha$ admits the following representation:

\begin{align}\label{equil}
\begin{split}
\alpha_{t} &= - \frac{1}{A + K} \left ( K Q_{t} + p_{0} + \pi p_{1} \widehat{Q}^{st} + ((1-\pi)p_{1} + K) (\widehat{Q}_{t} + \widehat{\alpha}_{t}) + Y_t + (f_{0} + f_{1} (\widehat{Q}_{t} + \widehat{\alpha}_{t} \right.\\ &\left. - \alpha^{tg}_{t}))J_{t}^{\theta} \right).
\end{split}
\end{align}

\subsection{Aggregator problem and Price of Anarchy}
In this part, we consider the point of view of an aggregator and characterize his optimal strategy, as well as discuss the related price of anarchy.
\paragraph{Aggregator problem}
We now introduce the following mean-field control problem of an aggregator who plays the role of a central planner who coordinates all the DSM consumers in the system, without taking into account the non-active consumers. The associated value function of the aggregator is given by
\begin{align*}
    V^{{MFC}^{agg}} =  \inf_{\alpha \in \mathcal{A}}\mathbb{E} & \left [ \int_{0}^{T} \left \{ g(\alpha_{t}, S^\alpha_{t}, Q_{t}) + l(Q_{t} + \alpha_{t}) + c^{\alpha,\widehat{\alpha}}_{t} + d^{\alpha,\widehat{\alpha}}_{t} \right \}dt + h(S^\alpha_{T}) \right].
\end{align*}

\noindent The solution to this optimization problem is called the $MFC^{agg}$ optimal control. Using a similar proof to the one of Theorem \ref{existunique}, we have the following characterization of the optimal control.

\begin{theorem}[Characterization of the aggregator's mean field control] \label{CHMFG}
Let $x_0=(s_0, q_0, q_0 ^{st})$ be a random vector independent of $\mathbb{F}^0$. \textcolor{black}{Assume that the map $\alpha \mapsto J^{MFC}(\alpha)$ is strictly convex}. If there exists a control $\alpha^\star \in \mathcal{A}$ which minimizes the map $\alpha \mapsto J^{MFC}(\alpha)$ and if $(S^{\alpha^\star}, Q, Q^{st})$ is the state process associated to the initial condition $x_0$, control $\alpha^\star$ and the dynamics \eqref{sys}-\eqref{sys1}, then there exists a unique solution $(Y^\star,q^{0, \star},q^{\star}, \nu^{0,\star}) \in \mathcal{S}^2 \times (\mathcal{H}^2)^2 \times \mathcal{H}_{\lambda^0}^2$ of the BSDE with jumps
\begin{align}\label{MFCagg}
\begin{cases}
    -dY^\star_t=\partial_x g(\alpha_t^{\star}, S_t ^{\alpha^\star} , Q_t)dt-q_t^{0, \star}dW_t^0-q^\star_t dW_t - \nu^{0,\star}_t d\widetilde N^0_t,\\ 
    Y^\star_T=\partial_x h(S_T^{\alpha^\star}), 
\end{cases}
\end{align}
satisfying the coupling condition
\begin{align}\label{coupling2}
\partial_\alpha g(\alpha^{\star}_t, S^{\alpha^{\star}}_t, Q_t)+\partial_\alpha l(Q_t+ \alpha^{\star}_t)+p\left(\pi\widehat{Q}^{st}_t +(1-\pi)(\widehat Q_t + \widehat{\alpha}^\star_t)\right) \nonumber \\
+ (\widehat Q_t + \widehat \alpha^{\star}_t)\partial_\alpha p(\pi \widehat Q_t^{st} + (1-\pi)(\widehat Q_t + \widehat \alpha^{\star}_t))  \nonumber\\
 \quad \quad +Y_t^{\star} + J_t^{\theta}f(\widehat{Q_t} +  {\widehat \alpha^\star_t} - {\color{black}\alpha^{tg}})+J_t^{\theta}(\widehat{Q_t} +{\widehat \alpha^\star_t} - {\color{black}\alpha^{tg}}) \partial_\alpha f(\widehat{Q_t} +  \textcolor{black}{\widehat \alpha^\star_t} - {\color{black}\alpha_t^{tg}}) =0,
\end{align}

{\color{black}with $\widehat{\alpha}^\star$ the optional projection of $\alpha^\star$ with respect to $\mathbb F^{0}$.}
Conversely, assume that there exists $\left(\alpha^\star,S^{\alpha^\star}, Y^\star, q^{0,\star},q^{\star}, \nu^{0,\star} \right) \in \mathcal A \times (\mathcal{S}^2)^2 \times (\mathcal{H}^2)^2\times \mathcal{H}^2_{\lambda^0}$ satisfying the coupling condition $\eqref{coupling2}$, as well as the FBSDE \eqref{sys}-\eqref{MFCagg}, then $\alpha^\star$ is the optimal control minimizing the map $\alpha \mapsto J^{MFC}(\alpha)$ and $S^{\alpha^\star}$ is the optimal trajectory.
\end{theorem}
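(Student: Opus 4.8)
The plan is to prove both implications by the stochastic Pontryagin maximum principle adapted to the McKean--Vlasov (mean-field control) setting, closely paralleling the argument for Theorem \ref{existunique} but accounting for the fact that here the control $\alpha$ also drives the mean-field interaction terms $\widehat{\alpha}^\star$ entering $c^{\alpha,\widehat\alpha}$ and $d^{\alpha,\widehat\alpha}$. The first preliminary step is to establish well-posedness of the adjoint BSDE \eqref{MFCagg}: since its driver $\partial_x g(\alpha^\star_t, S^{\alpha^\star}_t, Q_t)$ and terminal datum $\partial_x h(S^{\alpha^\star}_T)$ depend only on the (fixed) optimal state and control and are square-integrable under Assumption \ref{assum:MFG} (quadratic growth of $g,h$), existence and uniqueness of $(Y^\star, q^{0,\star}, q^\star, \nu^{0,\star}) \in \mathcal{S}^2 \times (\mathcal{H}^2)^2 \times \mathcal{H}^2_{\lambda^0}$ follow from the theory of BSDEs driven by a Cox process, i.e.\ an adaptation of the results in \cite{jumpdef} already invoked in Theorem \ref{existunique}.

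For the necessity part, I would assume $\alpha^\star$ is an optimizer and use a convex perturbation: for arbitrary $\beta \in \mathcal{A}$ set $\alpha^\epsilon = \alpha^\star + \epsilon(\beta - \alpha^\star)$, which is admissible since $\mathcal{A}$ is convex. Because only the $S$-component is controlled and its dynamics $dS^\alpha_t = \alpha_t\,dt$ are driftless in the noises, the linearized state is the continuous finite-variation process $V_t = \int_0^t (\beta_s - \alpha^\star_s)\,ds$, while $Q$ and $Q^{st}$ are unaffected. Differentiating $J^{MFC}(\alpha^\epsilon)$ at $\epsilon = 0$ produces the running-cost sensitivities $\partial_x g\cdot V$, $\partial_\alpha g\cdot(\beta-\alpha^\star)$, $\partial_\alpha l\cdot(\beta-\alpha^\star)$ together with the contributions of $c$ and $d$. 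Applying Itô's formula to $Y^\star_t V_t$ (the martingale brackets vanish because $V$ is continuous of finite variation, so the $q^{0,\star}dW^0$, $q^\star dW$ and $\nu^{0,\star}d\widetilde N^0$ terms integrate against $dV$ to zero) and taking expectations uses the terminal condition $Y^\star_T = \partial_x h(S^{\alpha^\star}_T)$ to cancel the $\partial_x g\cdot V$ and terminal-cost terms, reducing the Gâteaux derivative to $\mathbb{E}\big[\int_0^T \Lambda_t(\beta_t - \alpha^\star_t)\,dt\big]$ for an explicit coefficient $\Lambda_t$.

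The delicate point, and the main difference from the MFG theorem, is the treatment of the mean-field terms in $c^{\alpha,\widehat\alpha}$ and $d^{\alpha,\widehat\alpha}$. Each depends on $\alpha$ both directly, through the factor $(Q_t + \alpha_t)$ (resp.\ $(Q_t + \alpha_t - \alpha^{tg}_t)$), and indirectly, through the projected argument $\widehat\alpha$ of $p$ (resp.\ $f$). The direct dependence yields the terms $p(\pi\widehat Q^{st}_t + (1-\pi)(\widehat Q_t + \widehat\alpha^\star_t))$ and $J^\theta_t f(\widehat Q_t + \widehat\alpha^\star_t - \alpha^{tg}_t)$ in $\Lambda_t$, while the indirect dependence produces factors of $\widehat{(\beta - \alpha^\star)}$; on these I would apply the defining duality of the $\mathbb{F}^0$-optional projection, $\mathbb{E}[\int X_t \widehat{(\beta-\alpha^\star)}_t\,dt] = \mathbb{E}[\int \widehat X_t (\beta-\alpha^\star)_t\,dt]$, to transfer the conditioning onto the coefficient and so factor out $(\beta-\alpha^\star)$. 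Since $p'$ and $f'$ evaluated at the projected arguments are $\mathbb{F}^0$-measurable, this produces exactly the extra terms $(\widehat Q_t + \widehat\alpha^\star_t)\partial_\alpha p(\cdots)$ and $J^\theta_t(\widehat Q_t + \widehat\alpha^\star_t - \alpha^{tg}_t)\partial_\alpha f(\cdots)$ appearing in \eqref{coupling2}. Optimality, $\frac{d}{d\epsilon}J^{MFC}(\alpha^\epsilon)|_{\epsilon=0}=0$ for all $\beta$, combined with the fundamental lemma of the calculus of variations over the admissible class, then forces $\Lambda_t = 0$, i.e.\ the coupling condition \eqref{coupling2}.

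For the converse (sufficiency), I would exploit the assumed strict convexity of $\alpha \mapsto J^{MFC}(\alpha)$: for any $\alpha \in \mathcal{A}$, convexity gives $J^{MFC}(\alpha) - J^{MFC}(\alpha^\star) \geq \frac{d}{d\epsilon}J^{MFC}(\alpha^\star + \epsilon(\alpha - \alpha^\star))|_{\epsilon=0}$, and the right-hand side equals $\mathbb{E}[\int_0^T \Lambda_t(\alpha_t - \alpha^\star_t)\,dt]$ by the computation above; since \eqref{coupling2} is precisely $\Lambda_t = 0$, this derivative vanishes and $\alpha^\star$ minimizes $J^{MFC}$, with $S^{\alpha^\star}$ the optimal trajectory. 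I expect the main obstacle to be the rigorous bookkeeping of the optional-projection duality and the measurability of $\Lambda_t$ relative to the admissible filtration $\mathbb{F}^W \vee \mathbb{G}^0$ --- ensuring that the arbitrariness of $\beta$ indeed yields the pointwise coupling condition --- together with verifying the $\mathcal{H}^2_{\lambda^0}$ integrability needed for the Cox-driven BSDE; the remaining steps are, as the authors note, routine adaptations of Theorem \ref{existunique} and \cite{MFG_revised}.
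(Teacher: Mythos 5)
Your proposal is correct and follows essentially the same route as the paper, which proves this theorem only by reference: well-posedness of the adjoint BSDE via the Cox-process BSDE theory (the adaptation of \cite{jumpdef} given in the appendix), followed by the convex-perturbation stochastic maximum principle of Theorem 3.1 in \cite{MFG_revised}, exactly as in Theorem \ref{existunique}. Your fleshed-out treatment of the mean-field terms --- using the optional-projection duality $\mathbb{E}[\int X_t \widehat{(\beta-\alpha^\star)}_t\,dt] = \mathbb{E}[\int \widehat X_t (\beta-\alpha^\star)_t\,dt]$ together with the $\mathbb{F}^0$-measurability of $p'$, $f'$ and $J^\theta$ to generate the extra $(\widehat Q_t + \widehat\alpha^\star_t)\partial_\alpha p$ and $J^\theta_t(\widehat Q_t + \widehat\alpha^\star_t - \alpha^{tg}_t)\partial_\alpha f$ terms distinguishing \eqref{coupling2} from \eqref{eqn:coupling} --- is precisely the step the paper leaves implicit in the phrase ``similar proof,'' and you identify the right remaining technical caveats (martingale integrability and measurability of $\Lambda_t$ relative to $\mathbb{F}^W \vee \mathbb{G}^0$).
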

\begin{remark}\label{MFG/MFC} As observed in \cite{MFG_revised}, by comparing the coupling conditions \eqref{eqn:coupling} and \eqref{coupling2}, the optimal control for the $MFC^{agg}$ problem in the linear quadratic setting with pricing rules $p^{MFC^{agg}}(\pi \widehat{Q}^{st} + (1-\pi)Q) = p_0 + p_1(\pi \widehat{Q}^{st} + (1-\pi)Q)$ and $f^{MFC^{agg}}(Q) = f_0 + f_1Q$ corresponds to the $MFG$ equilibrium for the problem with pricing rules $p^{MFG}(\pi \widehat{Q}^{st} + (1-\pi)Q) = p_0 + 2p_1(1 - \pi)Q + p_1\pi \widehat{Q}^{st}$ and  $f^{MFG}(Q) = f_0 + 2f_1 Q $. 
\end{remark}

\paragraph{Price of Anarchy} 


The price of anarchy is defined as the ratio of a worst case social cost computed for a mean field game equilibrium to the optimal social cost as computed by a central planner.

For our problem, the expression for the price of anarchy takes the following form:

$$ \text{PoA} = \frac{V^{MFG}(\widehat{\alpha}^{\star})}{V^{MFC^{agg}}}, $$
where $\alpha^{\star}$ is the MFG Nash equilibrium.

\subsection{Deep learning algorithms for the MFG and MFC problem}
In this section, we design several numerical algorithms to compute in the linear-quadratic setting the MFG equilibrium and the mean-field optimal control for the aggregator's problem. The algorithms are based on the machine learning solvers introduced in the first part of the paper and extended to the case of a time-inhomogeneous Poisson process with \textit{stochastic intensity} of jumps.

\paragraph{Characterization of the MFG equilibria via a multi-dimensional coupled FBSDE with jumps} Using the results from the previous section, the MFG equilibria in the linear-quadratic case can be expressed through the following multi-dimensional fully-coupled FBSDE system:
\begin{align}\label{MFGsys1}
\textbf{(MFG)}\begin{cases}
dQ_{t} = \mu(\chi_{t} - Q_{t})dt + \sigma dW_{t} + \sigma^{0}dW^{0}_{t}, \\
d\widehat{Q}_{t} = \mu(\chi_{t} - \widehat{Q}_{t})dt  + \sigma^{0}dW^{0}_{t}, \\
dQ^{st}_{t} = \mu^{st}(\chi^{st}_{t} - Q^{st}_{t})dt + \sigma^{st} d\bar{W}_{t} + \sigma^{st, 0}dW^{0}_{t}, \\
d\widehat{Q}^{st}_{t} = \mu^{st}(\chi^{st}_{t} - \widehat{Q}^{st}_{t})dt  + \sigma^{st, 0}dW^{0}_{t}, \\
dR_{t} = dt - R_{t^{-}} dN^{0}_{t},\\
dS^{\alpha^{\star}}_{t} = - \frac{1}{A + K} \left ( K Q_{t} + p_{0} + \pi p_{1} \widehat{Q}^{st} + ((1-\pi)p_{1} + K) (\widehat{Q}_{t} + P(t,\widehat{Q}_t,\widehat{Q}^{st}_t, \widehat{Y}_t,R_t)) + Y_t + (f_{0} + f_{1} (\widehat{Q}_{t} \right.\\ \left. + P(t,\widehat{Q}_t,\widehat{Q}^{st}_t, \widehat{Y}_t,R_t)  - \alpha^{tg}_{t})) \mathds{1}_{R_{t} \leq \theta } \right) dt,\\
dS^{\widehat{\alpha}^{\star}}_{t} = P(t,\widehat{Q}_t,\widehat{Q}^{st}_t, \widehat{Y}_t,R_t)dt, \\
-dY_{t} = C S_{t}^{\alpha^{\star}} dt - q_{t}^{0}dW_{t}^{0} - q_{t}dW_{t} - \nu^{0}_{t}d\tilde{N}_{t}^{0},\\
-d\widehat{Y}_{t} = C S_{t}^{\widehat{\alpha}^{\star}} dt - \widehat{q}_{t}^{0}dW_{t}^{0} - \widehat{\nu}^{0}_{t}d\tilde{N}_{t}^{0}, \\
Q_{0} = q_{0}, \quad Q^{st}_0 = q^{st}_0, \quad R_{0} = 2\theta, \quad S^{\alpha^{\star}}_{0} = s_{0}, \quad Y_{T} = h_{1} + h_{2} S_{T}^{\alpha^{\star}}, \\ \widehat{Q}_{0} = q_{0}, \quad \widehat{Q}^{st}_0 = q^{st}_0,  \quad S^{\widehat{\alpha}^{*}}_{0} = s_{0}, \quad \widehat{Y}_{T} = h_{1} + h_{2} S_{T}^{\widehat{\alpha}^{\star}},
    \end{cases}
\end{align}
where
\begin{align*}
    P(t,\widehat{Q}_t,\widehat{Q}^{st}_t, \widehat{Y}_t,R_t)&:= - \frac{1}{A + K + (1-\pi) p_{1} + f_{1} \mathds{1}_{R_{t} \leq \theta } } \left ( p_{0} + \pi p_{1} \widehat{Q}^{st} + ((1-\pi)p_{1} + K) \widehat{Q}_{t} + \widehat{Y}_{t}  \right. \nonumber \\ & \left. + (f_{0} + f_{1} (\widehat{Q}_{t} - \alpha^{tg}_{t}))\mathds{1}_{R_{t} \leq \theta } \right).
\end{align*}

We are thus led to solve a fully-coupled multi-dimensional FBSDE driven by a doubly stochastic Poisson process (which admits an unique solution, by using the results from the part on the \textit{Characterization of the MFG equilibrium in the linear quadratic case} from subsection \ref{CharCox}). For a generic fully-coupled system of FBSDEs driven by a doubly Poisson process, the discretized version takes the form
\begin{align}
\label{eqn:discreteMFG1}
\begin{cases}
      X_{i+1}^{\pi} =  X_{i} + b(t_i,X_i^{\pi},Y_{i}) \Delta t_{i}  + \sigma (t_i,X_i^{\pi})   \Delta W_i +  \sigma^0 (t_i,X_i^{\pi})   \Delta W^0_i + \beta(t_i,X_i^{\pi}) dN^0_i  , \\
      Y_{i+1}^{\pi} \approx  Y_{i}^{\pi} - f(t_i,X_{i}^{\pi} ,Y_{i}^{\pi})\Delta t_{i}  + Z_i^{\pi} \Delta W_i + Z_i^{0,\pi} \Delta W^0_i
      + U_i^{\pi} dN^0_i - U_i^{\pi} \lambda^0_i \Delta t_i, \\
      X_0^{\pi} = \xi, \quad Y_M^{\pi} = g(X_M^{\pi}),\\
      i = 0, \cdots, M-1.
\end{cases}
\end{align}


Notice that in this setting, we do not require specific methods to estimate the compensator as it is directly given in this model. Thus, there is no need to test the two variants for the \textit{Sumlocal} and \textit{SumMultiStep} methods when comparing the deep learning solvers.

Furthermore, we make the following assumption on the intensity $\lambda^0$:
\begin{assumption}
There exists a continuous function $\bar{\lambda}:\mathbb{R} \mapsto \mathbb{R}$ such that
\begin{align}
\lambda_t^0=\bar{\lambda}(\widehat{Q}_t).
\end{align}
\end{assumption}
Finally, in view of Remark \ref{MFG/MFC}, the computation of the mean-field optimal control of the aggregator can be done through the same  multi-dimensional fully-coupled FBSDE system \eqref{MFGsys1}, but with different coefficients.


\paragraph{Numerical results and comparison between the deep-learning solvers.} We shall now perform a detailed analysis of the convergence and stability of the five different algorithms to solve the FBSDE system associated to the computation of the MFG equilibria. To do so, we first set the hyper-parameters.
\begin{center}
\begin{tabular}{| c | c |}
\hline
Parameter & value\\
\hline
 $m$ & 20\\
 $L$ & 2\\
 NbTraining & 10000\\
\hline
\end{tabular}
\quad
\begin{tabular}{| c | c |}
\hline
Parameter & value\\
\hline
 B & 64\\
 lRate & 0.01/0.007\\
 $\sigma_{a}$ & tanh\\
\hline
\end{tabular}
\quad
\begin{tabular}{| c | c |}
\hline
\end{tabular}
\end{center}

The compensator having an analytical form, we have adapted the algorithms and have taken batches of small sizes. 
In contrast to \cite{MFG_revised}, we consider a random intensity of jumps $(\lambda_t^0)$ and a  target process $(\alpha_t^{tg})$ which are given by
\begin{align*}
    \lambda^{0}_{t} = e^{-\frac{\gamma}{2}} (e^{\gamma \widehat{Q}_{t}} - 1),\,\,\,
    \alpha^{tg}_{t} =  \beta \mathbb{E} \left[\widehat{Q}_{t} \right].
\end{align*}

The other parameters associated to the model are inspired from \cite{MFG_revised} and are given below: 
\begin{center}
\begin{tabular}{| c | c |}
\hline
Parameter & value\\
\hline
 $T$ & 2 days\\
 $n_{steps}$ & 96 half-hours\\
 $A$ & 150\\
 $C$ & 80\\
 $K$ & 50\\
 $\chi^{st}$ & $\chi$\\
$\sigma^{0} = \sigma^{st,0}$  & 0.1\\
\hline
\end{tabular}
\quad
\begin{tabular}{| c | c |}
\hline
Parameter & value\\
\hline
$ \sigma$ & 0.3\\
$\sigma^{st}$ & 0\\ 
 $\mu$ & 5\\
 $h_{0} = h_{1}$ & 0\\
 $h_{2}$ & 600\\
 $\theta$ & 0.12 hours\\
 $s_{0}$ & 0\\
\hline
\end{tabular}
\quad
\begin{tabular}{| c | c |}
\hline
Parameter & value\\
\hline
 $ f_{0} $ & 0\\
 $f_{1}$ & 10000\\
 $p_{0}$ & 6.16 \euro/MWh \\
 $p_{1}$ & 87.43 \euro/MWh$^2$ \\
 $q_0 = q^{st}_0$ & $\chi_0$\\
 $\gamma$ & 30 \\
 $\beta$ & 0.8\\
\hline
\end{tabular}
\end{center}

The function $(\chi_{t})$ corresponds to the consumption seasonality observed from the data from \cite{MFG_revised}. 

We shall now compare the initial values of the backward components through each epoch. Notice that, between two epochs, $100$ stochastic gradient descents are performed. 
\begin{figure}[H]
    \centering
    \includegraphics[width= 16 cm]{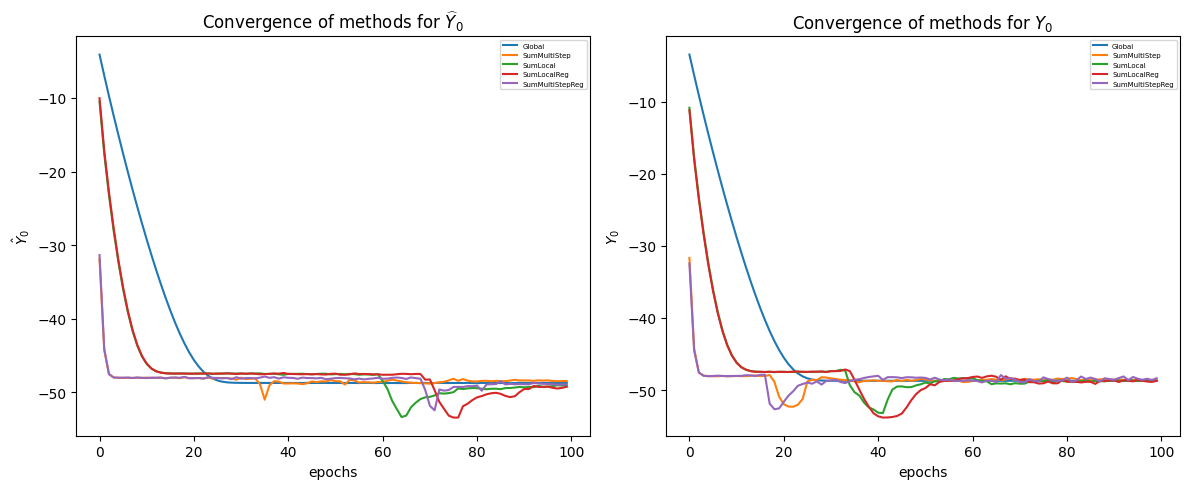}
    
    \caption{Convergence of the 5 algorithms in the MFG model}
    \label{fig:cvgMFG}
\end{figure}
According to various benchmarks and the convergence results in Figure \ref{fig:cvgMFG}, it is obvious  that the comparison results are similar to the pricing models.  The \textit{Global} method is the most stable one and provides a good approximation with a large learning rate which makes up for the problem of initializing low values. \textit{MultiStep} method and its regression version \textit{MultiStepReg} converge after few epochs with a learning rate considerably lower than the one used in the Global method. They present a good trade-off between convergence speed and stability. Finally, \textit{SumLocal} method and its regression version performed poorly in terms of stability as is obvious from the figures above.

\begin{remark}
 In the particular case of a Poisson process with a constant intensity and a constant consumption rate target $\alpha^{tg}$, we have compared our results with the ones obtained in \cite{MFG_revised} (which were obtained by combining a tree approximation of the martingales, as in e.g. \cite{dumitrescu2016numerical, dumitrescu2016reflected}, and the Monte Carlo method). We observed that our results were coherent with the ones provided in  \cite{MFG_revised}.
\end{remark}

\subsection{Interpretation of numerical results from a modelling perspective} In this Section, we provide an economic interpretation of our numerical results, which are computed using the \textit{Global method}. The results are illustrated on two typical customers whose power consumption are represented in Figure \ref{fig:subfigures}. Consumer 2 shows a typical power consumption profile with two peaks of consumption in the morning and in the evening. Consumer 2  needs more electricity during the first day than the consumers' average consumption, whereas Consumer 1 consumes very little during the first morning. As expected, the intensity of jumps is very high when the consumption is at its highest level. 
\begin{figure}[h!] 
    \centering
    \begin{subfigure}[b]{0.5\textwidth}
        \centering
        \includegraphics[width=\linewidth]{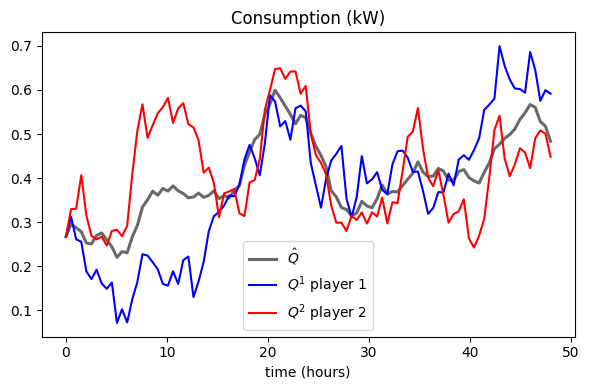}
    \end{subfigure}%
    \begin{subfigure}[b]{0.5\textwidth}
        \centering
        \includegraphics[width=\linewidth]{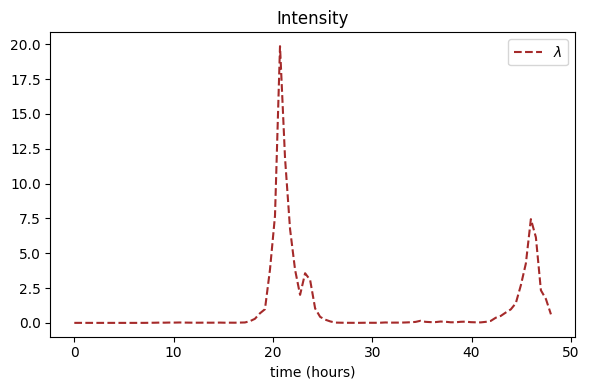}
    \end{subfigure}
      \caption{Trajectories over 48 hours of the consumption for 2 different consumers in kW (upper figure left) and the common intensity of jumps for the divergence costs (upper figure right). }
    \label{fig:subfigures}
\end{figure}    
 \begin{figure}[h!]   
        \centering
        \includegraphics[width=0.5\linewidth]{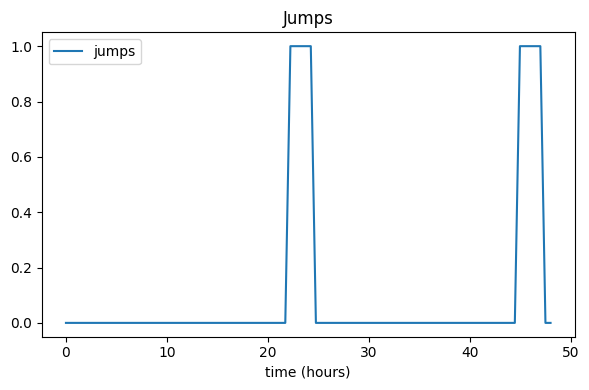}
    
    \caption{One trajectory of DSM activation jumps issued from the  intensity presented in the previous figure.}
    \label{fig:jump_trajectory}
\end{figure}
\\
The following results present how these two typical consumers optimize their consumption when two activations of the DSM contract happen following the DSM activation scenario presented in Figure \ref{fig:jump_trajectory} .

The illustrations show that the consumers react as expected: when they are exposed to dynamic pricing only (no activation of jump DSM), they smooth their consumption over the period as illustrated in Figure \ref{fig:optimise_traj_dynamic_jump_separated} \textcolor{blue}{(b)}. When they are exposed to divergence cost only, their average consumption perfectly matches the random target $\alpha^{tg}$, whereas the individual consumption of the consumers can differ from the target.

\begin{figure}[H]
    \centering
    \begin{subfigure}[b]{0.5\textwidth}
        \centering
        \subfloat[With divergence cost and no dynamic pricing]{{\includegraphics[width=\linewidth]{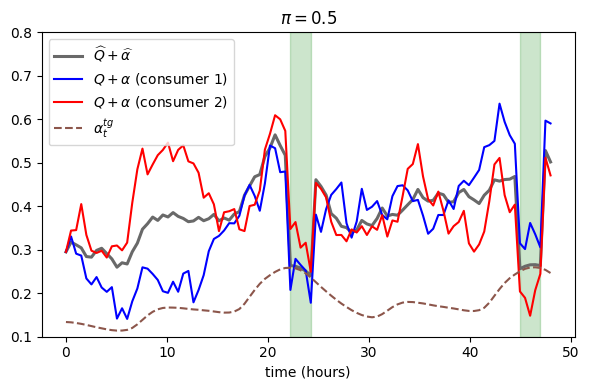}}}
    \end{subfigure}%
    \begin{subfigure}[b]{0.5\textwidth}
        \centering
        \subfloat[Dynamic pricing and without divergence cost]{{\includegraphics[width=\linewidth]{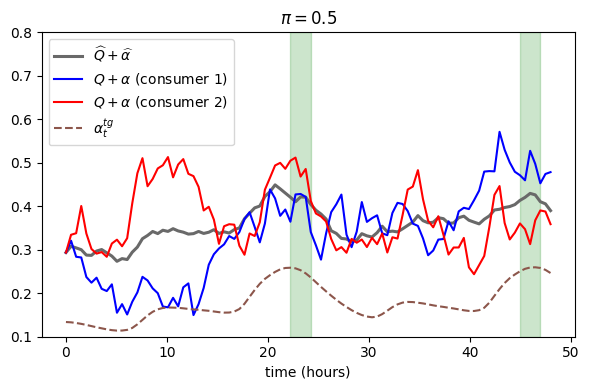}}}
    \end{subfigure}\\[1ex]
    \caption{Trajectories of $\widehat{Q} + \widehat{\alpha}$ and $Q + \alpha$ (in kW) for two consumers in the MFG setting when these consumers have no dynamic pricing but only the control with respect to the divergence cost (a) and when have dynamic pricing only (b). DSM activations are represented by the green bar.}
    \label{fig:optimise_traj_dynamic_jump_separated}
\end{figure}

When consumers are exposed to both dynamic pricing and divergence cost activation, they combine the two behaviours observed above. Their resulting  consumption is presented in Figure \ref{fig:optimise_traj_dynamic_jump_together}.
\begin{figure}[H]
    \centering
    \subfloat[With divergence cost and dynamic pricing]{{\includegraphics[width=8cm]{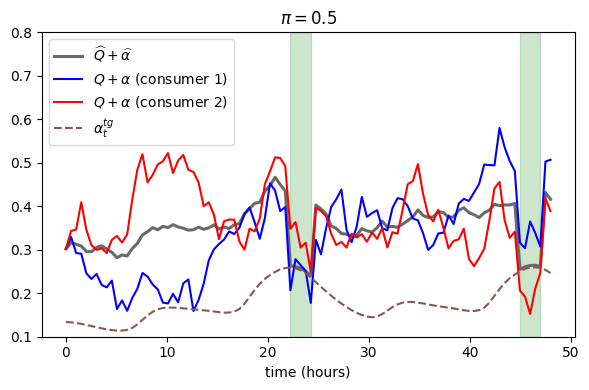}}}
    \caption{Trajectories of $\widehat{Q} + \widehat{\alpha}$ and $Q + \alpha$ (in kW) for two consumers in the MFG setting.}
    \label{fig:optimise_traj_dynamic_jump_together}
\end{figure}

We then analyze how the spot price reacts to the DSM contract (see Figure \ref{fig:spot_price}). We can observe that the proportion of consumers with DSM contract (the lower $\pi$, the more widespread the DSM contract within the global population) directly impacts how much spot price is smoothed and how much peak prices are reduced.\\

\begin{figure}[H]
    \centering
    {{\includegraphics[width=8cm]{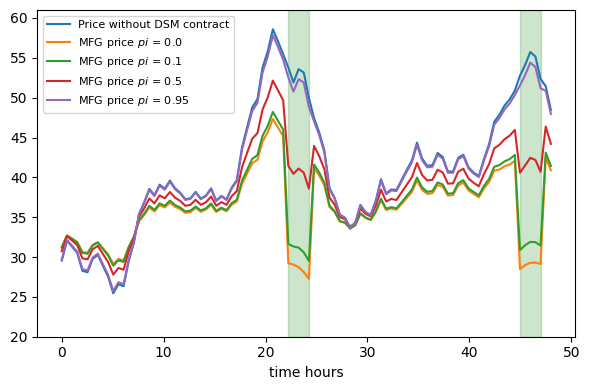} }}
    \caption{Trajectories of the price $p$ for four different proportions of active consumers in the MFG setting.}
    \label{fig:spot_price}
\end{figure}

\textit{A comparison between MFCagg and MFG.} We now provide a comparative analysis between the levels of the consumption and corresponding prices in the case when the optimization problem is either implemented from an aggregator perspective, i.e. a MFC problem, or is solved in the MFG setting. We can observe that 
when consumers are not selfishly optimizing their power consumption, but are guided by an aggregator they make greater effort to reduce their consumption (see figure \ref{fig:MFG_MFCagg}). This efficiency can be attributed to better coordination among consumers in the MFC problem. Naturally, as the prices follow the same trend as the power consumption, we observe that the prices are cheaper when there is an aggregator. \\
\begin{figure}[H]
    \centering
    {{\includegraphics[width=16cm]{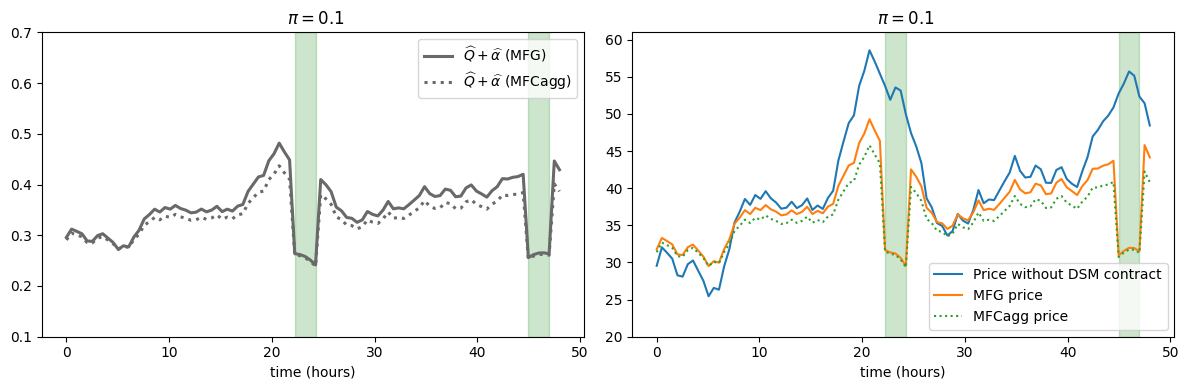} }}
    \caption{Trajectories of price $p$ (right) and $Q + \alpha$ in kW (left) for MFG setting (plain lines) compared to MFCagg setting (dotting lines)}
    \label{fig:MFG_MFCagg}
\end{figure}

We also perform numerical computations of the PoA. As expected, the PoA (see Table \ref{tab:PoA_standardprice}) increases with the proportion of customers who have a DSM contract in the population and is strictly superior to 1 when $\pi$ is low enough. When we consider $\pi=0.95$, the impact of the MFG optimization compared to MFC is indeed very little as the proportion of DSM consumers in the population is too low to impact the Price Of Anarchy.\\
 
\begin{table}[H]
\centering
\caption{PoA with standard prices $(p1 = 87.43)$.}
\label{tab:lowp}
\begin{tabular}{lcccc}
\toprule
        & $\pi=0$ & $\pi=0.1$ & $\pi=0.5$ & $\pi=0.95$ \\
\midrule
$V^{MFG}$     &  $34.104$ $(\pm 0.030)$    & $34.354$ $(\pm 0.030)$  &  $35.387$ $(\pm 0.031)$     &  $36.601$ $(\pm 0.032)$ \\
$V^{{MFC}^{agg}}$  &  $33.519$ $(\pm 0.029)$ &  $33.876$ $(\pm 0.029)$ &  $35.226$ $(\pm 0.030)$ & $36.599$ $(\pm 0.032)$        \\
$PoA$     & $1.017465$   & $1.014111$   & $1.004558$    & $1.000072$   \\
\bottomrule
\end{tabular}
\label{tab:PoA_standardprice}
\end{table}

It can also be observed that the PoA is sensitive to the different parameters of the model. In particular, by varying the coefficient $p_1$, we remark that if the spot price becomes much higher, the PoA increases as well as illustrated in Table \ref{tab:PoA_highprice}.   \\
\begin{table}[H]
\centering
\caption{PoA with high prices $(p1 = 1000)$.}
\label{tab:highp}
\begin{tabular}{lcccc}
\toprule
        & $\pi=0$ & $\pi=0.1$ & $\pi=0.5$ & $\pi=0.95$ \\
\midrule
$V^{MFG}$     &  $139.973$ $(\pm 0.127)$    & $144.598$ $(\pm 0.132)$  &  $164.214$ $(\pm 0.157)$     &  $178.435$ $(\pm 0.210)$ \\
$V^{{MFC}^{agg}}$  &  $119.421$ $(\pm 0.097)$ &  $126.039$ $(\pm0.105)$ &  $154.433$ $(\pm 0.145)$ & $179.536$ $(\pm 0.214)$       \\
$PoA$     & 1.172094   & 1.142468   & 1.063333    & 0.993867   \\
\bottomrule
\end{tabular}
\label{tab:PoA_highprice}
\end{table}

\newpage
\printbibliography[
heading=bibintoc,
title={}
] 
\end{document}